\newcommand{\charac}[1]{\mathbbm{1}_{#1}}
\newcommand{\lrp}[1]{\left( #1 \right)}
\newcommand{\R}{\textnormal{I\kern-0.21emR}}
\newcommand{\N}{\textnormal{I\kern-0.21emN}}
\renewcommand{\leq}{\leqslant}
\newtheorem{theorem}{Theorem}  
\newtheorem{proposition}{Proposition}
\theoremstyle{definition}\newtheorem{remark}{Remark}
\newcommand{\enum}[2]{\left\{#1,\dots,#2\right\}}
\title{\bf Vector-borne disease outbreak control via instant releases}
\author{Luis Almeida\footnote{Sorbonne Universit\'e, CNRS, Universit\'e de Paris, Inria, Laboratoire J.-L. Lions, 75005 Paris, France ({\tt luis.almeida@sorbonne-universite.fr}).}
	\and Jes\'us Bellver Arnau\footnote{Sorbonne Universit\'e, CNRS, Universit\'e de Paris, Inria, Laboratoire J.-L. Lions, 75005 Paris, France ({Current adress: Centre d’Estudis Avan\c{c}ats de Blanes (CEAB-CSIC), Carrer d’Accés a la cala Sant Francesc 14, 17300 Blanes, Spain, \tt jesus.bellver@ceab.csic.es}).}
	\and Yannick Privat\footnote{IRMA, Universit\'e de Strasbourg, CNRS UMR 7501, Inria, 7 rue Ren\'e Descartes, 67084 Strasbourg, France ({\tt yannick.privat@unistra.fr}).}\textsuperscript{~~}\footnote{Institut Universitaire de France (IUF).} 
	\and Carlota Rebelo\footnote{Departamento de Matem\'atica and CEMAT-Ci\^encias, Faculdade de Ci\^encias, Universidade de Lisboa, Campo Grande, 1749-016 Lisbon, Portugal ({\tt mcgoncalves@fc.ul.pt}).}
}
\date{}
\begin{document}
\maketitle

	\begin{abstract}
	This paper is devoted to the study of optimal release strategies to control vector-borne diseases, such as dengue, Zika, chikungunya and malaria. Two techniques are considered: the sterile insect one (SIT), which consists in releasing sterilized males among wild vectors in order to perturb their reproduction, and the {\it Wolbachia} one (presently used mainly for mosquitoes), which consists in releasing vectors, that are infected with a bacterium limiting their vector capacity, in order to replace the wild population by one with reduced vector capacity. In each case, the time dynamics of the vector population is modeled by a system of ordinary differential equations in which the releases are represented by linear combinations of Dirac measures with positive coefficients determining their intensity. We introduce optimal control problems that we solve numerically using ad-hoc algorithms, based on writing first-order optimality conditions characterizing the best combination of Dirac measures. We then discuss the results obtained, focusing in particular on the complexity and efficiency of optimal controls and comparing the strategies obtained. Mathematical modeling can help testing a great number of scenarios that are potentially interesting in future interventions (even those that are orthogonal to the present strategies) but that would be hard, costly or even impossible to test in the field in present conditions.
	\end{abstract}
	
	\noindent\textbf{Keywords:} Vector-borne diseases, sterile insect technique, Wolbachia, optimal epidemic vector control, impulsive control, dengue.

\section{Introduction} 

Vector-borne diseases have a large impact on human health around the world, representing 17\% of all infectious diseases. These diseases can be due to parasites, bacteria or viruses and be transmitted by different types of vectors like, for instance, ticks, fleas or mosquitoes. A significant part of the models presented in this paper are applicable in a general setting. In particular, the part concerning the Sterile Insect Technique (SIT) is applicable to any vector borne disease where male vectors do not transmit the disease and where the vector has sexual reproduction which will be significantly perturbed by the release of sterile males. 

Many of these diseases, such as dengue, Zika, chikungunya, yellow fever or the West Nile fever are caused by arboviruses. The vector responsible for the transmission of many arboviruses are the mosquitoes of the genus \textit{Aedes}, specially the species \textit{Aedes Aegipty} and \textit{Aedes Albopictus}. Dengue is the most prevalent of these diseases, with more than 3.9 billion people in over 129 countries at risk of contracting it, and an estimated 40,000 death toll every year according to the World Health Organization \cite{who}. Since, at present, there is no effective vaccine or antiviral drug, the only treatment option is to relieve the symptoms. As for preventing the spread of the disease, current methods consist of directly targeting the vector.

In the fight against arboviruses, and in particular dengue, two of the main control techniques targeting the mosquitoes are the SIT and the use of {\it Wolbachia}. Both methods rely on introducing mosquitoes into the wild population with certain modifications, which allow to control the infections. The SIT consists on the release of large amounts of sterile male mosquitoes in order to reduce the mosquito population by mating with the females in the place of the fertile ones. This technique has been both studied mathematically (see, for instance, \cite{bliman_sit})  and used in the field (for the mosquito case see, for instance, \cite{sterile_releases_bellini, sterile_release_harris}), not only with mosquitoes but also with other pests (like the success in eliminating of screw worms from Curacao and then from North-America, or to fight the tsetse fly in Africa). Therefore, our approach for the SIT case can be extended to yield fruitful results for controlling other vector populations.

The \textit{Wolbachia} technique has mostly been used for \textit{Aedes} mosquitoes (and this is the context in which we chose to present it in this paper) but there are also many promising signs indicating that it should be possible to use it for other types of mosquitoes or even other vectors \cite{Wolbachia_Nature}. The release of Wolbachia-infected mosquitoes on the other hand, does not seek to eradicate the mosquito population, but rather to replace it with a new one, less capable of transmitting several diseases. Thus, both males and females need to be released in order to establish a new population.

The technique is based in the use of the bacterium {\it Wolbachia}. This bacterium, naturally present in many species of Arthropodes, although not in \textit{Aedes} mosquitoes, has been proven effective in reducing the vector capacity of mosquitoes for several arboviruses \cite{walker,wolbachia_arboviruses}. Its use as a tool for epidemic control relies on the fact that {\it Wolbachia} is vertically transmitted from the mother to the offspring, which makes the new population self-sustainable. This method also takes advantage of a phenomenon called cytoplasmic incompatibility \cite{walker}. This phenomenon produces a crossed sterility between infected males and uninfected females, which can lead to a fast spread of the bacteria in the mosquito population despite the fact that {\it Wolbachia} may also shorten the lifespan and reduce the fertility of its hosts. {\it Wolbachia} releases have been shown to be successful both in establishing the new population \cite{Deployment_Townsville,Deployment_Yogyakarta_Stable} and in reducing the number of dengue cases \cite{Deployment_Cairns,Deployment_Yogyakarta_Dengue}.

Our main goal in this work is to take advantage of the wide possibilities offered by mathematical modeling, to study and compare the effect of these techniques in interaction with the disease dynamics and consider a large variety of scenarios that can give rise to new strategies to mitigate the effects of vector-borne disease outbreaks using mosquito releases. Presently, these techniques are used mainly in a preventive setting to avoid to be in conditions where a future epidemic could propagate and are not usually applied in an epidemiological outbreak context since they may take a few mosquito generations to have a significant impact on the mosquito population which can be a problem if the disease is already present and spreading fast. In case of an outbreak, other alternatives with more immediate effects exist, like the use of pesticides but these lead to a significant negative environmental impact. Moreover, they lead to a rapid development of resistance to these products in the mosquito population which reduces to a major reduction of the effect of insecticides in the mid and long term. 

Since the population replacement technique using \textit{Wolbachia} requires the release of female mosquitoes, which, though being much poorer vectors than its wild counterparts, might still transmit it in certain cases, it raises ethical questions when used in the case a virus is actively circulating in a population. 

For all these reasons, this work should be seen as a first step towards a better understanding of the effects of modified mosquito releases in epidemiological contexts, opening the debate around broadening the scope of application of these techniques.

Since the releases occur in a much shorter time scale than the duration of the outbreak, they will be considered instantaneous. Therefore, impulsive controls are a natural setting to model field releases of this type. Note that there is work to provide a well-posed framework, existence and regularity results to optimal control problems involving impulsive controls. This work has been developed in several articles such as \cite{Miller1,Miller2,Motta_Rampazzo1,Motta_Rampazzo2,Wolenski}. We provide a detailed proof of formulas for determining various quantities of interest, so that this work is self-contained, in Sections \ref{sec:control} and \ref{sec:optcond}. As stated before, our models are valid in a much wider setting but, for the sake of clarity, for the remaining of the paper we will describe them in the setting of arboviruses and of \textit{Aedes} mosquitoes as vectors. Although with several differences, previous works model and study the arboviruses transmission between {\it Wolbachia}-infected mosquitoes, wild-type mosquitoes and humans \cite{ndii_hickson,hughes_briton}. A previous study of optimal control related issues, considering only bang-bang controls, can be found in \cite{zhang_lui}.

In order to model the virus dynamics between mosquitoes and humans we consider a SEIR (Susceptible-Exposed-Infectious-Recovered) model for the humans and a SEI model for the mosquitoes (their short lifespan leads us to neglect the recovered compartment for the mosquitoes). Concerning the population dynamics we assume the humans to have the same birth and death rate and consider a logistic growth with a death term for the mosquitoes. The human and mosquito populations are identified by using subscripts $H$ and $M$, respectively.

\begin{equation}
\label{sys:SEIR}
\begin{array}{lll}
S'_H &= & b_H H-\displaystyle\frac{\beta_{MH}}{H} I_M S_H-b_H S_H\\
E'_H &= & \displaystyle\frac{\beta_{MH}}{H} I_M S_H-\gamma_H E_H-b_H E_H \\
I'_H & = & \gamma_H E_H-\sigma_H I_H-b_H I_H\\
S'_M & = & b_M M\left(1-\displaystyle\frac{M}{K}\right)-\displaystyle\frac{\beta_{HM}}{H}S_M I_H-d_MS_M\\
E'_M & = &\displaystyle\frac{\beta_{HM}}{H} S_M I_H-\gamma_M E_M-d_ME_M\\
I'_M & = &\gamma_M E_M-d_MI_M\\
\end{array}
\end{equation}

The positive parameters used in system \eqref{sys:SEIR} are:
\begin{itemize}
	\item $b_H$, $b_M$, the birth rates for humans and mosquitoes.
	\item $d_M$, the death rate for mosquitoes. For humans the death rate is assumed to be equal to the birth rate.
	\item $\beta_{MH}$ and $\beta_{HM}$ are, respectively, the rate of mosquito bites giving rise to a transmission between infected mosquitoes and humans and between infected humans and mosquitoes.
	\item $\gamma_H$ and $\gamma_M$ are the progression rates from latent to infectious compartments in humans and mosquitoes, respectively.
	\item $\sigma_H$ is the recovery rate from the disease (for humans).
	\item $K$ is the carrying capacity of the mosquito population. It is related to the maximal amount of mosquitoes that the environment can sustain.
	\item $H$ is the total amount of humans, $H=S_H+E_H+I_H+R_H$. 
	\item $M$ is the total amount of mosquitoes, $M=S_M+E_M+I_M$.
\end{itemize}
The equation for the recovered human reads $R'_H=\sigma_H I_H-b_H R_H$. Since $H$ is constant we can remove $R_H$ from the system of differential equations and compute it as $R_H=H-S_H-E_H-I_H$. {System \eqref{sys:SEIR} can be used for modeling, a priori, any vector-borne disease without other means of transmission and for which reinfection cannot occur.}

In order to study these disease controlling techniques we need to modify this basic system in a way that takes into account the particularities of each one of them.

\begin{remark}\label{rem:males}
	It is important to remark that throughout the paper whenever we refer to `mosquitoes' we are referring exclusively to the \textit{female} mosquitoes, unless the contrary is specified. Male mosquitoes do not bite humans and therefore do not transmit the diseases considered here. Thus, the variables referring to the mosquitoes such as $S_M$, $I_M$ or $I_M$, refer to female mosquitoes. An exception being when the SIT is treated (see section \ref{sec:SIT_Introduction}). In the SIT only male mosquitoes are released, thus, $M_S$ will refer to \textit{male} mosquitoes. In order to be able to do this simplification, we assume that male and female population have the same dynamics. We assume that the probability at birth of female and male is the same (50\%) and that they both have the same life expectancy ($d_{\male}=d_{\female}=d_M$).  These assumptions are not necessarily true in field conditions, but they simplify considerably the study and are thus reasonable to do in this work where our point is to illustrate how mathematical modeling can be used as a tool to conceive alternative strategies. We prefer to work in this "toy model" setting to avoid extra complexity that would complicate the study and even risk blurring some basic qualitative behaviors. Of course, if such studies were to be conducted for conceiving a specific field intervention, it would be necessary to consider more elaborate and realistic models to have more precise estimates.
\end{remark}

\subsection{The sterile insect technique}\label{sec:SIT_Introduction}
To model the effects of the addition of sterile mosquitoes to the system we have to add an equation for them and a term accounting for the interaction between them and the mosquito population. Following the same approach as in \cite{JDE} we introduce the following system

\begin{equation}
\label{sys:Sterile}
\tag{$SIT$}
\begin{array}{lll}
S'_H &= & b_H H-\displaystyle\frac{\beta_{MH}}{H} I_M S_H-b_H S_H\\
E'_H &= & \displaystyle\frac{\beta_{MH}}{H} I_M S_H-\gamma_H E_H-b_H E_H \\
I'_H & = & \gamma_H E_H-\sigma_H I_H-b_H I_H\\
S'_M & = & b_M M\left(1-\displaystyle\frac{M}{K}\right)\displaystyle\frac{M}{M+s_c M_S}-\displaystyle\frac{\beta_{HM}}{H}  S_M I_H-d_MS_M\\
E'_M & = &\displaystyle\frac{\beta_{HM}}{H}  S_M I_H-\gamma_M E_M-d_ME_M\\
I'_M & = &\gamma_M E_M-d_MI_M\\
M'_S & = &u-d_S M_S 
\end{array}
\end{equation}

Since sterile mosquitoes don't reproduce we only consider a death term and the function $u$, representing the rate at which sterile mosquitoes are introduced in the population and interpreted as a control term for this system. We also add a birth term in the susceptible mosquitoes compartment, proportional to the probability that a female mosquito encounters a fertile male to mate (assuming that there are the same amount of male and female mosquitoes in the wild population). The positive parameter $s_c$ accounts for the competitiveness of the sterile mosquitoes since female mosquitoes may be less inclined to mate with them. This parameter presents a huge variation in the literature, from works estimating it to be low ($s_c=0.14$ in \cite{Competitiveness_yes}) to works where no difference in competitiveness was found \cite{Competitivness_no}. According to \cite{Competitiveness_yes}, it would be relevant to assume the parameter $s_c$ depending on the ratio of sterile to fertile mosquitoes which would imply $s_c=s_c(M_S/M)$. Nevertheless, for simplicity, here we will assume it to be constant. 

Although vector-borne diseases are transmitted, both from human to mosquito and from mosquito to human, through the vector's bite, the biological process by which they are transmitted in each case is rather different. Mosquito acquires the virus through the blood of the host, while humans get infected through the saliva of the infected mosquito. Despite this fact, we have not found conclusive evidence to expect $\beta_{MH}$ to be higher or smaller than $\beta_{HM}$, therefore, for the numerical simulations of Section \ref{sec:results}, we use the same numerical value. Nonetheless, to keep the theoretical analysis compatible with the possibility that they may have different values, we keep the distinction between both quantities in the following computations. Note that there is no need to consider the dynamics of dengue in the sterile mosquito population, since the released mosquitoes are only male and, therefore, they do not feed on human blood. Thus, they are not vectors for disease transmission between humans.

\subsection{The {\it Wolbachia} method}
In this case we add a second mosquito population. This new population is composed of mosquitoes carrying Wolbachia, and the related quantities will be subscripted by $W$. It has been shown that {\it Wolbachia} decreases the fecundity and increases the mortality rates of mosquitoes \cite{walker}, therefore $b_W<b_M$ and $d_W>d_M$. Also, {\it Wolbachia} reduces the vector capacity of the mosquitoes. We thus introduce $0<\beta_{WH}<\beta_{HW}<\beta_{HM}=\beta_{MH}$ to make the distinction between the rate of mosquito bites giving rise to a transmission from human to Wolbachia-carrying mosquitoes, $\beta_{HW}$, and the rate of mosquito bites giving rise to a transmission from Wolbachia-carrying mosquitoes to humans, $\beta_{WH}$. Contrary to the previous case, we do make the distinction between both quantities also numerically in this case. The first one, $\beta_{HW}$, is smaller than $\beta_{MH}$, $\beta_{HM}$ since {\it Wolbachia} affects the capability of mosquitoes to feed due to a deformation in the trunk \cite{trunk}. The second one, $\beta_{WH}$, should be smaller than the first one since {\it Wolbachia} also affects the way the disease develops inside the body of the mosquitoes and reduces the viral load in their saliva  \cite{betaWH_moreira_iturbe,betaWH_bian_xu}. We also introduce the term $1-s_h\frac{W}{M+W}$ to take into account the cytoplasmic incompatibility. Here, $s_h$ represents the level of cytoplasmic incompatibility achieved by the strain of \textit{Wolbachia}. We have $0\leq s_h\leqslant 1$, with $s_h=0$ meaning that there is not any incompatibility and $s_h=1$ meaning that the incompatibility is perfect. Finally, we introduce $\gamma_W$ since {\it Wolbachia} also delays the amount of time it takes for dengue virus to reach the saliva of the mosquitoes, thus lengthening the effective incubation period of the disease in the mosquitoes carrying it \cite{EIP}.

\begin{equation}
\label{sys:Wolbachia}
\tag{$WB$}
\begin{array}{lll}
S'_H &= & bH-\displaystyle\frac{\beta_{MH}}{H} I_M S_H-\displaystyle\frac{\beta_{WH}}{H} I_W S_H-b_HS_H\\
E'_H &= & \displaystyle\frac{\beta_{MH}}{H} I_M S_H+\displaystyle\frac{\beta_{WH}}{H} I_W S_H -\gamma_H E_H-b_H E_H \\
I'_H & = & \gamma_H E_H-\sigma_H I_H-b_H I_H\\
S'_M & = & b_MM\left(1-\displaystyle\frac{M+W}{K}\right)\left(1-s_h\displaystyle\frac{W}{M+W}\right)-\displaystyle\frac{\beta_{HM}}{H} S_M I_H-d_MS_M\\
E'_M & = &\displaystyle\frac{\beta_{HM}}{H} S_M I_H-\gamma_M E_M-d_ME_M\\
I'_M & = &\gamma_M E_M-d_MI_M\\
S'_W & = & b_W W\left(1-\displaystyle\frac{M+W}{K}\right)-\displaystyle\frac{\beta_{HW}}{H} S_W I_H-d_WS_W+u\\
E'_W & = &\displaystyle\frac{\beta_{HW}}{H} S_W I_H-\gamma_W E_W-d_WE_W\\
I'_W & = &\gamma_W E_W-d_WI_W
\end{array}
\end{equation}

Before moving on to the control problem we perform two simplifications on the system. We consider the following variables: $M:=S_M+E_M+I_M$ and $W:=S_W+E_W+I_W$. These variables account for the mosquito population regardless of the dengue dynamics. These variables present the following dynamics
\begin{equation}
\label{eq:pop_dyn}
\begin{array}{lll}
M' & = & b_MM\left(1-s_h\displaystyle\frac{W}{M+W}\right)\left(1-\displaystyle\frac{M+W}{K}\right)-d_M M\\
W' & = & b_WW\left(1-\displaystyle\frac{M+W}{K}\right)-d_W W+u\\
\end{array} 
\end{equation}
These equations describing the population dynamics of the mosquitoes in our model are those of the model in \cite{wolbachia}. One can observe looking at the values in table \ref{tab:parameters} that $b_M\gg d_M$ and $b_W\gg d_W$. That is, that the birth rate of the mosquitoes is much higher than the death rate in both populations.
In \cite[Prop.~1]{wolbachia}, it is proven that in the high birth rate limit, i.e. considering $b_M=b_M^0/\varepsilon$, $b_W=b_W^0/\varepsilon$ and taking the limit $\varepsilon\to 0$, the proportion of mosquitoes $p=W/(M+W)$ converges uniformly to the solution of a simple equation on the proportion of {\it Wolbachia}-infected mosquitoes.
Hence, the asymptotic system \eqref{eq:pop_dyn} reads:
\begin{equation*}
p'=f(p)+ug(p).
\end{equation*}
where $$f(p)=p(1-p)\frac{d_M b_W^0-d_Wb_M^0(1-s_hp)}{b_M^0(1-p)(1-s_hp)+b_W^0p}$$ and $$g(p)=\frac{1}{K}\frac{b_M^0(1-p)(1-s_hp)}{b_M^0(1-p)(1-s_hp)+b_W^0p}.$$ 
Another consequence is that $M+W$ converges to $K$ and so, in the limit, $W=(M+W)\frac{W}{M+W}=Kp$, and therefore $M=K(1-p)$.

This limit leaves the equations for the humans and for the infected mosquitoes unchanged. In order to modify the equations for the latent mosquitoes we can straightforwardly set $M+W=K$. Finally, using that $S_M=M-E_M-I_M$ and $S_W=W-E_W-I_W$ we can eliminate the two equations for the susceptible mosquitoes from the system. The equations for the exposed mosquitoes become:

\begin{equation}
\label{eq:exposed}
\begin{array}{lll}
E'_M & = & \displaystyle\frac{\beta_{HM}}{H}  (K(1-p)-E_M-I_M) I_H-\gamma_M E_M-d_ME_M\\
E'_W & = & \displaystyle\frac{\beta_{HW}}{H} (Kp-E_W-I_W) I_H-\gamma_W E_W-d_WE_W
\end{array}
\end{equation}

Incorporating these changes into system \eqref{sys:Wolbachia} we obtain the system we are going to study

\begin{equation}
\label{sys:Wolbachia_Simp}
\tag{$WB'$}
\begin{array}{lll}
S'_H &= & bH-\displaystyle\frac{\beta_{MH}}{H} I_M S_H-\displaystyle\frac{\beta_{WH}}{H} I_W S_H-b_HS_H\\
E'_H &= & \displaystyle\frac{\beta_{MH}}{H} I_M S_H+\displaystyle\frac{\beta_{WH}}{H} I_W S_H -\gamma_H E_H-b_H E_H \\
I'_H & = & \gamma_H E_H-\sigma_H I_H-b_H I_H\\
E'_M & = &\displaystyle\frac{\beta_{HM}}{H}  (K(1-p)-E_M-I_M) I_H-\gamma_M E_M-d_ME_M\\
I'_M & = &\gamma_M E_M-d_MI_M\\
E'_W & = &\displaystyle\frac{\beta_{HW}}{H} (Kp-E_W-I_W) I_H-\gamma_W E_W-d_WE_W\\
I'_W & = &\gamma_W E_W-d_WI_W\\
p' &=& f(p)+ug(p)
\end{array}
\end{equation}

\section{Study of the uncontrolled system}\label{sec:uncontrolled}
In this section we study the uncontrolled systems (setting $u=0$ for all $t\in[0,T]$) and compute the equilibria and the per stage reproduction number (given by the next generation technique), $R_0$, of dengue in each case. This $R_0$ is a useful tool in the study of epidemiological systems with two stages, in this case host-vector and vector-host. It stands for  the number of secondary infections generated per stage in a population where all individuals are susceptible to the disease ($S_H=H$ and $S_M=$ total population of vectors), which is the setting in which we will perform the numerical simulations. This number is the square root of the basic reproduction number \cite[Page 110]{martcheva}.

\subsection{Sterile insect technique}
Since we consider $u=0$ and $M_S(0)=0$, it follows that $M_S(t)=0$ for all $t\in[0,T]$, turning system \eqref{sys:Sterile} into \eqref{sys:SEIR}. So computing the equilibria and $R_0$ of this system boils down to computing those of system \eqref{sys:SEIR}. 
We compute now the equilibria of system \eqref{sys:SEIR}. Since $M=S_M+E_M+I_M$, for any equilibrium $(S_M^*,E_M^*,I_M^*)$ of the system, we have that $M^*=S_M^*+E_M^*+I_M^*$ must also be an equilibrium of the equation
\begin{equation}
M'  =  b_M M\left(1-\displaystyle\frac{M}{K}\right)-d_M M .
\end{equation}
This equation presents two equilibria, $M^*=0$ and $M^*=K(1-d_M/b_M)$. We can use this to simplify the study of the equilibria of system \eqref{sys:SEIR}. The system to solve becomes

\begin{equation}
\begin{array}{lll}
0 &= & b_H H-\displaystyle\frac{\beta_{MH}}{H} I_M^* S_H^*-b_H S_H^*\\
0 &= & \displaystyle\frac{\beta_{MH}}{H} I_M^* S_H^* -\gamma_H E_H^*-b_H E_H^* \\
0 & = & \gamma_H E_H^*-\sigma_H I_H^*-b_H I_H^*\\
0 & = & d_M M^*-\displaystyle\frac{\beta_{HM}}{H} S_M^* I_H^*- d_M S_M^*\\
0 & = &\displaystyle\frac{\beta_{HM}}{H} S_M^* I_H^*-\gamma_M E_M^*-d_M E_M^*\\
0 & = &\gamma_M E_M^*-d_M I_M^*\\
\end{array}
\end{equation}

Solving this simpler system we obtain three different equilibria:
\begin{itemize}
	\item The extinction (vector-free) equilibrium $$(S_H^*,E_H^*,I_H^*,S_M^*,E_M^*,I_M^*)=\lrp{H,0,0,0,0,0}$$
	\item The disease-free equilibrium $$(S_H^*,E_H^*,I_H^*,S_M^*,E_M^*,I_M^*)=\lrp{H,0,0,K^*,0,0}$$
	\item The endemic equilibrium 
		$$(S_H^*,E_H^*,I_H^*,S_M^*,E_M^*,I_M^*)=\lrp{H-a_H I_H^*,\frac{\sigma_H+b_H}{\gamma_H}I_H^*,I_H^*,K^*-a_MI_M^*,\frac{d_M}{\gamma_M}I_M^*,I_M^*},$$
	where $a_H=\frac{(\gamma_H+b_H)(\sigma_H+b_H)}{b_H\gamma_H}$, $a_M=\frac{\gamma_M+d_M}{\gamma_M}$, $$ I_H^*=\frac{K^* \beta_{MH}}{H b_H a_M + K^*\beta_{MH}}\lrp{1-\frac{1}{(R_0^M)^2}}\frac{H}{a_H},$$ 
	and $$I_M^*=\frac{\beta_{HM}}{a_H d_M+\beta_{HM}}\lrp{1-\frac{1}{(R_0^M)^2}}\frac{K^*}{a_M}.$$
	It is enligthening to write the endemic equilibrium in terms of the $R_0$ of the system (that we denote $R_0^M$), since it clearly shows that if $R^M_0<1$ the endemic equilibrium does not exist.
\end{itemize}
	
	In order to compute $R_0^M$, we proceed as in \cite{R0_comp}. The relevant compartments for these computations are only the infected ones. As \cite{R0_comp} points out, this distinction is determined from the epidemiological interpretation of the model and cannot be deduced from the structure of the equations alone. The infected compartments in our case are those in which there are individuals carrying the dengue virus. For model \eqref{sys:SEIR} these  are $E_H$,$I_H$,$E_M$ and $I_M$.
	
	We need then to separate the changes in the compartments due to new infections from the rest. We write system \eqref{sys:SEIR} in the following way $\mathbf{x} =(E_H,I_H,E_M, \allowbreak I_M, S_H, S_M)$, $$\dot{\mathbf{x}}=\mathcal{F}(\mathbf{x})-\mathcal{V}(\mathbf{x}),$$ where  $\mathcal{F}$ contains the rate of appearance of new infections in each compartment and $\mathcal{V}$ the rate of transfer of individuals into the compartments by all other means. 
	
	Let's see the decomposition of the first equation, $E_H'$, as an example:
	$$E_H'= \underbrace{\displaystyle\frac{\beta_{MH}}{H} I_M S_H}_{\mathcal{F}_1(\mathbf{x})}-\underbrace{(\gamma_H E_H+b_H E_H)}_{\mathcal{V}_1(\mathbf{x})}.$$
	Doing this decomposition for all the equations we obtain
	$$\mathcal{F}(\mathbf{x})=\left(\displaystyle\frac{\beta_{MH}}{H} I_M S_H,0,\displaystyle\frac{\beta_{HM}}{H} S_M I_H,0,0,0\right)$$
	and $\mathcal{V}(\mathbf{x})$ containing all the other terms.
	
	Then we construct the matrices $$F=\left(\frac{\partial\mathcal{F}_i}{\partial x_j}(\mathbf{x_0})\right)_{i,j} \quad \mbox{and} \quad V=\left(\frac{\partial\mathcal{V}_i}{\partial x_j}(\mathbf{x_0})\right)_{i,j} \quad \mbox{,} \quad i,j=1,\dots,4 $$ where $\mathbf{x_0}$ represents the equilibrium for which we compute the $R_0$, i.e., the disease-free equilibrium $\mathbf{x_0}=(0,0,0,0,H,K^*)$. The values taken by $i$ and $j$ are given by the fact we labeled the infected compartments $1$ to $4$. These matrices for our model read
	
	$$F=\begin{pmatrix}
		0 & 0 & 0 & \beta_{MH} \\
		0 & 0 & 0       & 0 \\
		0 & \displaystyle\frac{\beta_{HM}}{H}K^* & 0 & 0 \\
		0 & 0 & 0       & 0 \\
	\end{pmatrix} \quad \mbox{and} \quad 
	V=\begin{pmatrix}
		\gamma_H+b_H & 0 & 0 & 0 \\
		-\gamma_H & \sigma_H+b_H & 0 & 0 \\
		0 & 0 & \gamma_M+d_M & 0 \\
		0 & 0 & -\gamma_M & d_M \\
	\end{pmatrix}.$$
	
	In \cite{R0_comp} is shown that $R_0=\rho(F V^{-1})$, where $\rho$ denotes the spectral radius of the resulting matrix, namely 
	\begin{equation}\label{def:R0M}
		R^M_0:=\rho(F V^{-1})=\sqrt{\frac{\beta_{HM}\beta_{MH} K^*\gamma_M \gamma_H}{H d_M(b_H+\sigma_H)(\gamma_M+d_M)(\gamma_H+b_H)}}.
	\end{equation}
	where $K^*=K(1-d_M/b_M)$. For the parameters considered in Table \ref{tab:parameters} we find $R_0^M\approx 1.67$, {which gives a basic reproduction number of  $\lrp{R_0^M}^2\approx 2.80$.}

\subsection{{\it Wolbachia} method}\label{subsec:wolb_uncontrolled}

Since the equation $p'=f(p)$ is independent of the rest we can solve it separately. The function $f(p)$ has only three zeros, $p^*=0$, $p^*=1$ and $p^*=\theta$, satisfying $0<\theta<1$. The last zero only exists if we further assume that $1-s_h<\frac{d_Mb_W^0}{d_Wb_M^0}<1$, which is satisfied in our case. The value of $\theta$ can be computed from the parameters of the problem, yielding $\theta=\frac{1}{s_h}\lrp{1-\frac{d_M b_W^0}{d_W b_M^0}}$. This implies that, independently of the epidemiological part of the model, there exists a {\it Wolbachia}-free equilibrium, a full invasion equilibrium and a coexistence equilibrium in the mosquito population.

We compute now the solutions of 
\begin{equation}
	\label{sys:6}
	\begin{array}{lll}
		0 &= & b_H H-\displaystyle\frac{\beta_{MH}}{H} I_M^*S_H^*-\displaystyle\frac{\beta_{WH}}{H} I_W^* S_H^*-b_H S_H^*\\
		0 &= &\displaystyle\frac{\beta_{MH}}{H} I_M^*S_H^*+\displaystyle\frac{\beta_{WH}}{H} I_W^* S_H^* -\gamma_H E_H^*-b_H E_H^* \\
		0 & = & \gamma_H E_H^*-\sigma_H I_H^*-b_H I_H^*\\
		0 & = &\displaystyle\frac{\beta_{HM}}{H}(K(1-p^*)-E_M^*-I_M^*) I_H^*-\gamma_M E_M^*-d_ME_M^*\\
		0 & = &\gamma_M E_M^*-d_MI_M^*\\
		0 & = &\displaystyle\frac{\beta_{HW}}{H} (Kp^*-E_W^*-I_W^*) I_H^*-\gamma_W E_W^*-d_WE_W^*\\
		0 & = &\gamma_W E_W^*-d_WI_W^*\\
	\end{array}
\end{equation}
as functions of $p^*$.
Let us define  $a_W:=\frac{\gamma_W+d_W}{\gamma_W}$,
\begin{equation}\label{def:R0W}
	R_0^W:=\sqrt{\frac{\beta_{HW}\beta_{WH}K\gamma_W \gamma_H}{Hd_W(b_H+\sigma_H)(\gamma_W+d_W)(\gamma_H+b_H)}},
\end{equation}
and $R_0^M$ as defined in \eqref{def:R0M} but using $K$ instead of $K^*$. Note that in the high birth rate limit, $K^*=K\lrp{1-\frac{d_M}{b_M}}$ tends to $K$. These $R_0^M$ and $R_0^W$ are the per stage reproduction numbers associated with the disease-free equilibria, for $p^*=0$ (Wolbachia-free) and $p^*=1$ (full invasion) respectively. Let us also define $R_{p^*}^2:=\lrp{R_0^W}^2p^*+\lrp{R_0^M}^2(1-p^*)$ {(an analogous closed formula was considered in \cite{cardona_salgado}).}
We find that system \eqref{sys:6} has the trivial solution $(S_H^*,E_H^*,I_H^*,E_M^*,I_M^*,E_W^*,I_W^*)=(H,0,0,0,0,0,0)$, which gives three different equilibria for system \eqref{sys:Wolbachia_Simp}: $(H,0,0,0,0,0,0,0)$, $(H,0,0,0,0,0,0,\theta)$ and $(H,0,0,0,0,0,0,1)$.

In case $R_{p^*}>1$, system \eqref{sys:6} presents another real solution
\[
\lrp{H-a_H I_H^*,\frac{\sigma_H+b_H}{\gamma_H}I_H^*,Hr,\frac{d_M}{\gamma_M}I_M^*,\frac{K}{a_M}\frac{\beta_{HM} r}{d_M + \beta_{HM} r}(1-p^*),\frac{d_W}{\gamma_W}I_W^*,\frac{K}{a_W}\frac{\beta_{HW} r}{d_W + \beta_{HW} r}p^*},
\]
where $r$ is the positive root of the second order polynomial
\begin{eqnarray*}
	P(Z)&=&Z^2\lrp{\beta_{HM}\beta_{HW}+a_H\lrp{\beta_{HM} d_W \lrp{R_0^M}^2(1-p^*)+\beta_{HW} d_M \lrp{R_0^W}^2 p^*}}\\
	&&+Z\lrp{R_{p^*}^2 a_H d_M d_W-\lrp{R_{p^*}^2-1}\lrp{\beta_{HM} d_W +\beta_{HW} d_M}}- \lrp{R_{p^*}^2-1}d_M d_W.
\end{eqnarray*} 
That means that system \eqref{sys:Wolbachia_Simp} can have up to six equilibria, due to the fact that there are three different values of $p^*$ and that $R_{p^*}$ can be bigger than one for some values of $p^*$ but not for others. 

Since in system \eqref{sys:Wolbachia_Simp} mosquitoes with {\it Wolbachia} are also present, there are six infected compartments and two relevant $R_0$, one at the disease-free/{\it Wolbachia}-free equilibrium, $R_0^M$, and one at the disease-free/full invasion equilibrium, $R_0^W$. We follow, step by step, the same procedure, adapting it to the new system for each of the $R_0$. We define $\mathbf{x}=(E_H,I_H,E_M,I_M,E_W,I_W,S_H,p)$ and we write the system as $\dot{\mathbf{x}}=\mathcal{F}(\mathbf{x})-\mathcal{V}(\mathbf{x})$, where
$$\mathcal{F}(\mathbf{x})^{\top}=\begin{pmatrix}
	\displaystyle\frac{\beta_{MH}}{H} I_M S_H +\displaystyle\frac{\beta_{WH}}{H} I_W S_H \\ 0 \\
	\displaystyle\frac{\beta_{HM}}{H} (K(1-p)-E_M-I_M) I_H \\
	0 \\
	\displaystyle\frac{\beta_{HW}}{H} (Kp-E_W-I_W) I_H \\
	0 \\
	0 \\
	0
\end{pmatrix} ,$$
and $\mathcal{V}(\mathbf{x})$ contais the rest of the terms.

The two relevant equilibria are both disease-free, one is the {\it Wolbachia}-free equilibrium, $\mathbf{x_0^M}=(0,0,0,0,0,0,H,0)$ and the other the full invasion equilibrium, $\mathbf{x_0^W}=(0,0,0,0,0,0,H,1)$.
Matrix $V$ is the same in both cases, namely
$$
V=\begin{pmatrix}
	\gamma_H+b_H & 0 & 0 & 0 & 0 & 0\\
	-\gamma_H & \sigma_H+b_H & 0 & 0 & 0 & 0 \\
	0 & 0 & \gamma_M+d_M & 0 & 0 & 0 \\
	0 & 0 & -\gamma_M & d_M & 0 & 0\\
	0 & 0 & 0 & 0 & \gamma_W+d_W & 0 \\
	0 & 0 & 0 & 0 & -\gamma_W & d_W \\
\end{pmatrix}.$$
On the other hand, $F$ has a different value at each equilibrium, namely
$$F_M=\begin{pmatrix}
	0 & 0 & 0 & \beta_{MH} & 0 & \beta_{WH}\\
	0 & 0 & 0       & 0 & 0 & 0\\
	0 & \displaystyle\frac{\beta_{HM}}{H}K & 0 & 0 & 0 & 0\\
	0 & 0 & 0       & 0 & 0 & 0\\
	0 & 0 & 0       & 0 & 0 & 0\\
	0 & 0 & 0       & 0 & 0 & 0\\
\end{pmatrix}$$ and 
$$F_W=\begin{pmatrix}
	0 & 0 & 0 & \beta_{MH} & 0 & \beta_{WH}\\
	0 & 0 & 0       & 0 & 0 & 0\\
	0 & 0 & 0 & 0 & 0 & 0\\
	0 & 0 & 0       & 0 & 0 & 0\\
	0 & \displaystyle\frac{\beta_{HW}}{H}K & 0       & 0 & 0 & 0\\
	0 & 0 & 0       & 0 & 0 & 0\\
\end{pmatrix}.$$
Although computed differently, we recover the value of $R_0^M$ obtained in \eqref{def:R0M} for system \eqref{sys:SEIR} replacing $K^*$ by $K$. The value of $R_0^W:=\rho(F_W V^{-1})$ obtained is, precisely, the value given in \eqref{def:R0W}. In this case, the per stage reproduction number $R_0^M$ is slightly higher than the per stage reproduction number for the sterile insect model due to the change of $K^*$ by $K$. For the values in Table \ref{tab:parameters} we find $R_0^M\approx 1.68$ and $R_0^W\approx 1.04$, {which give basic reproduction numbers of $\lrp{R_0^M}^2\approx 2.83$ and $\lrp{R_0^W}^2\approx 1.08$ respectively}. That means that even in a fully invaded population, outbreaks could still appear, but would have a smaller impact. Nevertheless these values should be taken with a grain of salt, since most of the parameters considered present a lot of variability in the literature.

To conclude this section, we present a result on the persistence of the disease in the system.
\begin{theorem}
	If there exists $p^*$ such that  $R_{p^*}>1$,  then the system \eqref{sys:Wolbachia_Simp} is uniformly persistent in the space of the initial conditions such that $p(t)\to p^*$, that is, there exists $\eta>0$ such that for each initial condition with $p(0)$ such that $p(t)\to p^*$ and $(E_H+I_H+E_M+I_M+E_W+I_W)(0)>0$ we have that
	$$\liminf_{t\to +\infty} (E_H+I_H+E_N+I_N+E_W+I_W)(t)>\eta.$$
	If   $R_{p^*}<1$ for each initial condition with $p(0)$ such that $p(t)\to p^*$ we have that 
$$\lim_{t\to +\infty} (E_H+I_H+E_N+I_N+E_W+I_W)(t)=0.$$
\end{theorem}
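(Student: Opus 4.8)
The plan is to exploit that, since $u=0$, the scalar equation $p'=f(p)$ is decoupled from the rest of \eqref{sys:Wolbachia_Simp}, so that for any admissible initial datum $p(t)$ converges to one of the equilibria $p^*\in\{0,\theta,1\}$. Freezing $p$ at its limit turns the remaining six infected equations, written for $\mathbf{y}=(E_H,I_H,E_M,I_M,E_W,I_W)$, into an asymptotically autonomous system whose limit system reads
\[
\mathbf{y}'=(F_{p^*}-V)\,\mathbf{y}+N(\mathbf{x}),
\]
where $V$ is the matrix introduced above, $F_{p^*}$ is the next-generation matrix obtained by replacing $p$ by $p^*$ (so that $F_{p^*}$ interpolates between $F_M$ and $F_W$ and satisfies $\rho(F_{p^*}V^{-1})=R_{p^*}$), and $N$ gathers the remaining nonlinear terms. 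The key structural fact is that $N(\mathbf{x})\leq 0$ componentwise on the biologically relevant region: each new-infection term is proportional to a susceptible quantity ($S_H\leq H$, $K(1-p)-E_M-I_M\leq K(1-p)$, $Kp-E_W-I_W\leq Kp$), so the full infected dynamics is dominated by the cooperative linear system $\mathbf{z}'=(F_{p^*}-V)\mathbf{z}$, up to the vanishing perturbation coming from $p(t)\to p^*$.

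For the extinction regime $R_{p^*}<1$, I would invoke the characterization underlying the next-generation matrix method (as in \cite{R0_comp}): since $V$ is a non-singular M-matrix and $F_{p^*}\geq 0$, one has $\rho(F_{p^*}V^{-1})<1$ if and only if the spectral abscissa $s(F_{p^*}-V)<0$. Hence $(F_{p^*}-V)$ is a Hurwitz, quasi-positive matrix whose flow decays exponentially. Fixing $\varepsilon>0$ small enough that an $\varepsilon$-inflated matrix still has negative spectral abscissa (by continuity of $s$), and using $p(t)\to p^*$ to absorb the perturbation for $t$ large, a componentwise comparison argument for cooperative systems yields $\mathbf{y}(t)\to 0$, and therefore $E_H+I_H+E_M+I_M+E_W+I_W\to 0$. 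This establishes the second assertion.

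For the persistence regime $R_{p^*}>1$, the same characterization gives $s(F_{p^*}-V)>0$, so the disease-free equilibrium is linearly unstable in the infected directions. I would then apply the theory of uniform persistence (in the spirit of Smith--Thieme and Hale--Waltman) to the limit autonomous system. The hypotheses to verify are: (i) point-dissipativity, which holds because $H$ is constant and the mosquito compartments are bounded above by $K(1-p^*)$ and $Kp^*$, producing a compact absorbing set; (ii) the disease-free set $\{\mathbf{y}=0\}$ is forward invariant, and the flow restricted to it converges to the single disease-free equilibrium, which is thus isolated and acyclic in the boundary; and (iii) this equilibrium is a uniform weak repeller for trajectories starting with a positive total of infected individuals, which follows from $s(F_{p^*}-V)>0$ together with the domination structure above. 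These three facts yield uniform strong persistence of the infected total, i.e. the existence of $\eta>0$ with $\liminf_{t\to\infty}(E_H+I_H+E_M+I_M+E_W+I_W)(t)>\eta$, after a final step transferring the conclusion from the limit system to the genuine trajectories.

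The main obstacle I anticipate is exactly this last transfer, together with the rigorous verification of (ii)--(iii). Uniform persistence is not automatically inherited under asymptotic autonomy---there are classical counterexamples---so one must either use a persistence theorem robust under the vanishing non-autonomous perturbation, or analyze the chain-transitive limit sets directly and rule out boundary heteroclinic cycles created by the convergence $p(t)\to p^*$. Establishing the weak-repeller property in the full six-dimensional infected block (rather than through a scalar surrogate) and checking acyclicity of the boundary flow are the technical core; the extinction half, by contrast, reduces to a routine linear comparison.
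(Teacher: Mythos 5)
Your strategy is genuinely different from the paper's. You propose to pass to the limit system obtained by freezing $p$ at $p^*$, use the next-generation equivalence $\rho(F_{p^*}V^{-1})\gtrless 1 \iff s(F_{p^*}-V)\gtrless 0$, and then invoke the abstract Hale--Waltman/Smith--Thieme persistence machinery (dissipativity, acyclic isolated boundary dynamics, weak repulsion) for the first assertion and a cooperative comparison for the second. The paper never leaves the full autonomous system: it keeps $p$ as a coordinate, restricts to the positively invariant compact set ${\cal K}\subset\mathbb{R}^7_{+0}\times[0,\theta-\zeta]$, and applies Fonda's uniform repeller criterion to the boundary set ${\cal S}=\{E_H+I_H+E_M+I_M+E_W+I_W=0\}$ by exhibiting an explicit weighted sum $P$ of the infected compartments (the weights being, in effect, a perturbed left Perron-type vector of the next-generation structure) that satisfies $\phi'\geq k\phi$ along any trajectory confined to a small neighbourhood of ${\cal S}$; the inequality \eqref{Rp*} is built with enough slack (the parameter $\delta_1$ and the $2\eta$ margins) to absorb both the smallness of the mosquito compartments and the fact that $p(t)$ is only close to, not equal to, $p^*$. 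What the paper's route buys is precisely the elimination of the step you yourself flag as the main obstacle: no transfer of uniform persistence from an asymptotically autonomous system to its limit is needed, and no acyclicity analysis beyond the observation that ${\cal K}\setminus{\cal S}$ is invariant.

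That flagged step is a genuine gap in the proposal as written, not a formality: uniform persistence is not in general inherited under asymptotic autonomy, and the theorems that do permit the transfer require controlling the chain-transitive sets of the full flow near the boundary, which amounts to essentially the same work as the paper's direct estimate. A second, smaller imprecision: the componentwise bound $N(\mathbf{x})\leq 0$ holds only up to the sign of $K(1-p)-K(1-p^*)$ and $Kp-Kp^*$, i.e.\ up to the vanishing perturbation, which is harmless for extinction but means the domination points the wrong way for the weak-repeller property; there you need the complementary estimate that the quadratic terms are negligible near the disease-free set, which is exactly what the paper's choice of $\varepsilon$, $\eta$ and \eqref{Rp*} encodes. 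If you wish to keep your architecture, the cleanest repair is to do what the paper does implicitly: form the scalar functional $\langle w,\mathbf{y}\rangle$ for $w$ a perturbed left Perron eigenvector and derive the differential inequality directly on the full system, thereby bypassing the limit-system transfer altogether.
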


\begin{proof}
Let us fix $p^*=0$, the remaining cases can be dealt analogously. First of all we assume $R_{p^*}>1$ and apply  \cite[Theorem 1]{fonda88} in order to obtain our persistence result. The set of initial conditions which we refer to in the theorem is
$$\left\{(S_H,E_H,I_H,E_M,I_M,E_W,I_W,p)\in {\Bbb R}^7_{+0}\times[0,\theta[\right\}.$$ 
Note that it is an immediate consequence of the equations that  if one of the latent or of the infectious classes is nonempty, then it will remain always nonempty.
Moreover we know that if $p(0)<\theta$, then $p(t)\to p^*$. Hence, in order to prove persistence in our set we can consider a $0<\zeta<\theta$ and prove persistence in  
$$\left\{(S_H,E_H,I_H,E_M,I_M,E_W,I_W,p)\in {\Bbb R}^7_{+0}\times[0,\theta-\zeta]\right\}.$$ 
Notice that we assumed the human population constant and equal to $H$ and that the mosquito population satisfies a logistic growth. Taking this into account there exists a constant $\bar K>0$ such that the set
\[
\resizebox{\textwidth}{!}{${\cal K}=\left\{(S_H,E_H,I_H,E_M,I_M,E_W,I_W,p)\in {\Bbb R}^7_{+0}\times[0,\theta-\zeta]: S_H+E_H+I_H+E_M+I_M+E_W+I_W \le \bar K \right\}$
}
\]
is a  positively invariant compact set and each solution of system \eqref{sys:Wolbachia_Simp}  with initial condition in ${\Bbb R}^7_{+0}\times [0,\theta-\zeta
]$ enters in ${\cal K}$. For each $x_0=(S_H^0,E_H^0,I_H^0,E_M^0,I_M^0,E_W^0,I_W^0,\allowbreak p^0) \in {\cal K}$ there exists exactly one solution $x(t;x_0)$ of system \eqref{sys:Wolbachia_Simp}  defined in  ${\Bbb R}_{0+}$ and such that $x(0;x_0)=x_0$ and $x(t;x_0)\in  {\cal K}$ for all $t\ge 0$. We have that $x_0\to x(t;x_0)$ is a semi-dynamical system in ${\cal K}$. 

Consider the set
$${\cal S}= \left\{ (S_H,E_H,I_H,E_M,I_M,E_W,I_W,p) \in {\cal K}: E_H+I_H+E_M+I_M+E_W+I_W = 0 \right\}.$$
We have that the set ${\cal K}\setminus{\cal S}$ is invariant by the remark above about the latent and the infectious classes. 
As we have   $R_{p^*}>1$ we can consider $\delta_1>0$ and $\eta>0$ such that 

\begin{equation} 
	\begin{array}{l}
		\displaystyle \left(\frac{\gamma_M \beta_{HM}\beta_{MH}}{Hd_M(\gamma_M+d_M)}-\frac{(1+\delta_1)(\gamma_H+b_H)(\sigma_H+b_H)}{\gamma_H}\right)(K(1-p^*)-2\eta)\\
		\\
		\displaystyle +\left(\frac{\gamma_W \beta_{HW}\beta_{WH}}{Hd_W(\gamma_W+d_W)}-\frac{(1+\delta_1)(\gamma_H+b_H)(\sigma_H+b_H)}{\gamma_H}\right)  (Kp^*-2\eta)>0.
	\end{array}
	\label{Rp*}
\end{equation}

We consider $\xi$ and $\delta_2$ such that $0<\delta_2<\xi<\delta_1$ and define in ${\cal K}$ the map 
	\[
	\resizebox{\textwidth}{!}{$
		\begin{array}{l}
			P(S_H,E_H,I_H,E_M,I_M,E_W,I_W)=\\
			\\
			(1+\xi) E_H+\frac{(1+\delta_1)(\gamma_H+b_H)}{\gamma_H}I_H+\frac{\gamma_M\beta_{MH}}{d_M(\gamma_M+d_M)}E_M+\frac{(1+\delta_2)\beta_{MH}}{d_M}I_M+\frac{\gamma_W\beta_{WH}}{d_W(\gamma_W+d_W)}E_W+\frac{(1+\delta_2)\beta_{WH}}{d_W}I_W.
		\end{array}$}
	\]
	
Let us consider also for sufficiently small $\varepsilon$ the neighbourhood of ${\cal S}$ 
$${\cal U}=\left\{x\in  {\cal K}:\,\,P(x)<\varepsilon\right\}.$$
We have that 
$$P(x)=0\Longleftrightarrow x\in S.$$
Moreover, let us assume, in order to arrive to a contradiction, that: 
\begin{equation}
	\exists x_0\in {\cal U}\setminus {\cal S}  \mbox{ such that } P(x(t;x_0))<\varepsilon \mbox{ for all } t>0. 
	\label{epsilon}
\end{equation} Let $\phi(t)=P(x(t;x_0))$, we are going to prove that there exists $k>0$ such that 
\begin{equation}\phi'(t)\ge k \phi(t)\label{contra}
\end{equation}
for large $t$.
In fact, taking into account \eqref{epsilon}, we obtain that there exists $\varepsilon^*>0$ such that $\liminf_{t\to +\infty} S_H(t)>\frac{b_H H}{\varepsilon^*+b_H}$ and this $\varepsilon^*>0$ can be chosen sufficiently small if we choose $\varepsilon$ small. We assume that $\varepsilon$ is chosen in order to imply that  $\displaystyle \frac{\varepsilon^*+b_H}{b_H}(1+\delta_2)<1+\xi$ and also that the  latent and infected mosquitoes classes are smaller than $\eta$ for $t>0$ (this will be useful after and is possible by  \eqref{epsilon}). Then, we evaluate $\phi'(t)$ and recall that $p(t)\to p^*$ when $t\to +\infty$.
We obtain
	\begin{eqnarray*}
		\phi'(t)&=& \left(\frac{b_H}{\varepsilon^*+b_H}(1+\xi)-(1+\delta_2)\right)(\beta_{MH} I_M+\beta_{WH} I_W)\\
		&+& (\delta_1-\xi)(\gamma_H+b_H)E_H+\delta_2\left(\frac{\beta_{MH}\gamma_M}{d_M}E_M+\frac{\beta_{WH}\gamma_W}{d_W}E_W\right)\\
		&+&\left( \left(\frac{\gamma_M \beta_{HM}\beta_{MH}}{Hd_M(\gamma_M+d_M)}-\frac{(1+\delta_1)(\gamma_H+b_H)(\sigma_H+b_H)}{\gamma_H}\right)(K(1-p)-E_M-I_M)\right.\\
		&+& \left.\left(\frac{\gamma_W \beta_{HW}\beta_{WH}}{Hd_W(\gamma_W+d_W)}-\frac{(1+\delta_1)(\gamma_H+b_H)(\sigma_H+b_H)}{\gamma_H}\right)(Kp-E_W-I_W)\right)I_H.
	\end{eqnarray*}
We have that $p(t)\to p^*$ and hence, by \eqref{Rp*}, we have that for sufficiently large $t$ the coefficient of $I_H$ in the last expression is positive.
The existence of $k>0$ satisfying \eqref{contra} follows and this contradicts \eqref{epsilon}. We conclude that  ${\cal S}$ is an uniform repeller and the result for $R_{p^*}>1$ follows.

The case $R_{p^*}<1$ can be obtained in the spirit of the previous one constructing this time a function $\phi^*$ for which there exists $k^*<0$ such that for each $t>0$ 
\begin{equation}{\phi^*}'(t)\le -k^* \phi^*(t).
\end{equation}

\end{proof}

\section{Control Problem and Impulsive Dynamics}\label{sec:control}
We place ourselves in the case of a dengue outbreak in a fully susceptible population. In this work, we will consider the case where the goal of the releases is to minimize the number of dengue cases (among humans) during the duration of the outbreak. Therefore, considering a time window of size $T$, we want to find $u$ minimizing $\int_0^T I_H(t)\, dt$. Other works have studied related problems in the case of {\it Wolbachia} \cite{zhang_lui}, or problems involving only the mosquito population \cite{ABP,cont_ster_wolb,radon,JDE} considering controls in $ L^{\infty}(0,T)$. 

Field releases are done with a certain periodicity and last a short amount of time with respect to the time window considered, this leads us to consider the particular case where the control, denoted $u(\cdot)$, is assumed to be a linear combination of pulses, namely 
$$
u(t)=\sum_{i=1}^{n}c_i\delta(t-t_i).
$$
where $\delta(t)$ is the Dirac measure at $t=0$ and $0\leq t_1\leq \dots\leq t_n$ are the release times.
It is natural to impose some constraints on the control function. Usually it is assumed that the rate at which mosquitoes are released is bounded ($u\in L^{\infty}(0,T)$) but also that the total amount of mosquitoes used is bounded ($\int_0^T u(t)dt \leq C$). Our approach is different. We also assume that we have a limited amount of mosquitoes at our disposal, $C$, but we assume that all of them are used. Since our control function is a linear combination of pulses, this translates into imposing the constraint $\sum_{i=1}^{n}c_i=C$. Therefore, for both systems \eqref{sys:Sterile} and \eqref{sys:Wolbachia_Simp}, the optimization problem we will study is 
\begin{equation}\label{prob:IH}\tag{\ensuremath{\mathcal{P}}}
\begin{minipage}{10cm}
\it Minimize $J(u)$ over the set of time jumps $(t_i)_{1\leq i\leq n}\in[0,T]^n$ and of nonnegative coefficients $(c_i)_{1\leq i\leq n}$ such that $\displaystyle \sum_{i=1}^n c_i= C$,
\end{minipage}
\end{equation}
where the number of jumps, $n$, and the time horizon, $T$, are fixed and the cost functional $J$, is given by
\begin{equation}\label{def:JIH}
J(u):=\int_0^T I_H(t)dt,
\end{equation}
and is proportional to the average number of humans that are in the infected compartment each day between $0$ and $T$.
Since we are going to deal with several jumps it is convenient to introduce some notation first. We consider $n$ jumps performed at times $t_i$, for $i=1,\dots,n$. If needed, for the sake of notational simplicity, we will denote $t_0=0$ and $t_{n+1}=T$. Since functions may present discontinuities we introduce the notations
\begin{equation*}
F(t_i^{-}) :=  \lim\limits_{t\to t_i^{-}} F(t),\qquad
F(t_i^+) := \lim\limits_{t\to t_i^+} F(t),
\end{equation*}
where $F(t)$ represents any function. The equations for $M_S$ and $p$ in systems \eqref{sys:Sterile} and \eqref{sys:Wolbachia_Simp} must be adapted to the impulsive formulation of the problem. We also introduce the characteristic function of a set $\mathcal{S}$, equal to $1$ when its variable belongs to $\mathcal{S}$ and $0$ elsewhere. In what follows, we will denote it $\charac{\mathcal{S}}$.

By considering $u$ defined by $u(t)=\sum_{i=1}^{n}c_i\delta(t-t_i)$ in systems \eqref{sys:Wolbachia_Simp} and \eqref{sys:Sterile} we can pass from a infinite dimensional optimization problem to a discrete one. Here we detail how, by doing this passage, these systems where the control appears become differential equations with jump discontinuities. In order to do so we consider $u$ given by  $u(t)=\sum_{i=1}^{n}\frac{c_i}{\varepsilon}\charac{[t_i,t_i+\varepsilon]}$ and we take the limit $\varepsilon\to 0$. The following proof is adapted from \cite{delta_jump}. We detail the deduction of equation \eqref{eq:jump_p}. However, equation \eqref{eq:jump_Ms} can be easily obtained following the same reasoning.

\begin{proposition}\label{prop:jump}
	Let us consider $p_{\varepsilon}$, solving the following equation 
	$$
	\left\{\begin{array}{ll}
		p_{\varepsilon}'(t)=f(p_{\varepsilon}(t))+\frac{c_i}{\varepsilon}\charac{[t_i,t_i+\varepsilon]} g(p_{\varepsilon}(t)), & t\in[t_{i-1},t_{i+1}] \\
		p_{\varepsilon}(t_{i-1})=p_{i-1}. &\end{array}\right.
	$$
	Let $G$ be the antiderivative vanishing at zero of $1/g(p)$, that is $G(p):=\int_0^p \frac{dq}{g(q)}$. Then, when $\varepsilon$ tends to $0$, $p_{\varepsilon}(\cdot)$ converges pointwise to $p(\cdot)$ given by
	$$
	p(t) = \begin{cases} p^{-}(t) ,& t\in[t_{i-1},t_i] \\
		p^{+}(t) ,& t\in(t_i,t_{i+1}]      
	\end{cases}
	$$  
	where $p^-$ and $p^+$ solve
	$$
	\left\{\begin{array}{ll}
		\frac{dp^{-}}{dt}(t) = f(p^{-}(t)) & \\
		p^{-}(t_{i-1}) = p_{i-1}, &
	\end{array}\right.
	\quad\text{and}\quad
	\left\{\begin{array}{ll}
		\frac{dp^{+}}{dt}(t) = f(p^{+}(t)) & \\
		p^{+}(t_{i}) = G^{-1}(G(p^{-}(t_i))+c_i), &
	\end{array}\right.
	$$  
	respectively.
	
\end{proposition}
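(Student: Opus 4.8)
The plan is to split the interval $[t_{i-1},t_{i+1}]$ into three phases---before, during, and after the pulse---and to treat each separately, the crux being the short window $[t_i,t_i+\varepsilon]$ where the singular forcing acts. On $[t_{i-1},t_i]$ the indicator $\charac{[t_i,t_i+\varepsilon]}$ vanishes, so $p_\varepsilon$ solves $p_\varepsilon'=f(p_\varepsilon)$ with $p_\varepsilon(t_{i-1})=p_{i-1}$ independently of $\varepsilon$; hence $p_\varepsilon\equiv p^-$ on this interval and, in particular, $p_\varepsilon(t_i)=p^-(t_i)$ exactly for every $\varepsilon$. The whole content of the statement is therefore concentrated in the passage across $[t_i,t_i+\varepsilon]$.

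For the pulse I would rescale time, setting $s=(t-t_i)/\varepsilon$ and $q_\varepsilon(s)=p_\varepsilon(t_i+\varepsilon s)$ for $s\in[0,1]$. Since the indicator equals $1$ throughout this window, a direct computation gives
$$\frac{dq_\varepsilon}{ds}(s)=\varepsilon f(q_\varepsilon(s))+c_i\,g(q_\varepsilon(s)),\qquad q_\varepsilon(0)=p^-(t_i).$$
As $\varepsilon\to 0$ the drift $\varepsilon f(q_\varepsilon)$ becomes negligible and the limiting profile $q_0$ solves the separable equation $dq_0/ds=c_i\,g(q_0)$ with $q_0(0)=p^-(t_i)$. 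Separating variables and integrating over $s\in[0,1]$ yields $\int_{q_0(0)}^{q_0(1)}dq/g(q)=c_i$, that is $G(q_0(1))-G(p^-(t_i))=c_i$. Because $g>0$ on $[0,1)$ and $\int^1 dq/g(q)=+\infty$, the map $G$ is an increasing bijection from $[0,1)$ onto $[0,+\infty)$, so $G^{-1}$ is well-defined at $G(p^-(t_i))+c_i$ and $q_0(1)=G^{-1}(G(p^-(t_i))+c_i)$. Reading $q_0(1)=\lim_{\varepsilon\to 0}p_\varepsilon(t_i+\varepsilon)$ identifies the post-jump value $p^+(t_i)$ with the announced formula.

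On $(t_i+\varepsilon,t_{i+1}]$ the forcing vanishes again, so $p_\varepsilon$ solves $p_\varepsilon'=f(p_\varepsilon)$ starting from $p_\varepsilon(t_i+\varepsilon)\to p^+(t_i)$; continuous dependence of ODE solutions on the initial datum then gives pointwise convergence of $p_\varepsilon$ to the solution $p^+$ of $p'=f(p)$ with $p^+(t_i)=G^{-1}(G(p^-(t_i))+c_i)$ on this final interval, which is exactly the claimed limit.

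The main obstacle, and the only step requiring genuine care, is the rigorous justification of the singular limit on $[t_i,t_i+\varepsilon]$. I would first establish uniform (in $\varepsilon$) a priori bounds keeping $q_\varepsilon$ inside a compact subinterval of $[0,1)$ on which $f$ and $g$ are Lipschitz and $g$ is bounded below, and then estimate $q_\varepsilon-q_0$ by a Gronwall argument, controlling the perturbation through $\varepsilon\,\|f\|_\infty$. The positivity of $g$ together with its behaviour near $p=1$ (which forces $G(p)\to+\infty$, so that the rescaled trajectory cannot reach the boundary $p=1$ in finite ``time'' $c_i$) is precisely what makes these bounds available and the limiting formula well-posed; the analogous argument for $M_S$ in \eqref{eq:jump_Ms} is simpler since there the forcing is additive rather than multiplicative.
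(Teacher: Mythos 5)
Your proposal is correct and follows essentially the same route as the paper: after the time rescaling $s=(t-t_i)/\varepsilon$, your comparison of $q_\varepsilon$ with the solution of $dq_0/ds=c_i\,g(q_0)$ is exactly the paper's comparison of $p_\varepsilon$ with the auxiliary solution $z_\varepsilon$ of $z_\varepsilon'=\frac{c_i}{\varepsilon}g(z_\varepsilon)$ on $[t_i,t_i+\varepsilon]$, with the same Grönwall bound of order $\varepsilon M_f e^{c_i L_g}$ and the same separation-of-variables identity $G(q)\big|_{p^-(t_i)}^{p^+(t_i)}=c_i$. Your added observation that $G$ maps $[0,1)$ bijectively onto $[0,+\infty)$, so that $G^{-1}$ is well defined at $G(p^-(t_i))+c_i$, is a small well-posedness point the paper leaves implicit.
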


\begin{proof}
	Outside the interval $[t_i,t_i+\varepsilon]$ the behaviour of $p(t)$ is clear. We study the behaviour of $p_{\varepsilon}(t)$ in $[t_i,t_i+\varepsilon]$, in order to establish the jump of $p(t)$ at $t_i$. Let
	
	$$p_{\varepsilon}(t)=p^{-}(t_i)+\int_{t_{i}}^{t}f(p_\varepsilon(s))+\frac{c_i}{\varepsilon}g(p_\varepsilon(s))ds.$$
Then, for every $t\in [t_i,t_i+\varepsilon]$, one has
	\begin{eqnarray*}
		\left|p_{\varepsilon}(t)\right| &\leq& \left|p^{-}(t_i)\right|+\int_{t_{i}}^{t}\left|f(0)\right|+\frac{c_i}{\varepsilon}\left|g(0)\right|ds+\int_{t_{i}}^{t}\left(L_f+\frac{c_i}{\varepsilon}L_g\right)\left|p_{\varepsilon}(s)\right|ds \\
		&\leq& 1+\frac{c_i}{K}+\int_{t_{i}}^{t}\left(L_f+\frac{c_i}{\varepsilon}L_g\right)\left|p_{\varepsilon}(s)\right|ds
	\end{eqnarray*}
	where $L_f$ and $L_g$ are the Lipschitz constants of $f(\cdot)$ and $g(\cdot)$ respectively. These constants exist since both functions are $C^1$ in $[0,1]$.
	Using Grönwall's inequality we obtain that
	$$\left|p_{\varepsilon}(t)\right| \leq \lrp{1+\frac{c_i}{K}}\exp{\left(\varepsilon L_f+c_i L_g\right)},$$
	which is bounded.
	Let us consider now $z_{\varepsilon}$, the solution of 
	$$
	\left\{\begin{array}{ll}
		z_{\varepsilon}'(t) = \frac{c_i}{\varepsilon} g(z_{\varepsilon}(t)) &\\
		z_{\varepsilon}(t_{i})= p(t_i^-), &
	\end{array}\right.
	$$
	We prove now that, in the limit, both $z_{\varepsilon}$ and $p_{\varepsilon}$ present the same jump at $t_i$. In order to do this we compute for $t\in[t_i,t_{i}+\varepsilon]$,
	\begin{eqnarray*}
		\left|z_{\varepsilon}(t)-p_{\varepsilon}(t)\right| &\leq& \int_{t_{i}}^{t}\left|f(p_{\varepsilon}(s))\right|ds+\int_{t_{i}}^{t}\frac{c_i}{\varepsilon}\left|g(z_{\varepsilon}(s))-g(p_{\varepsilon}(s))\right|ds \\
		&\leq& \varepsilon M_f+\int_{t_{i}}^{t}\frac{c_i}{\varepsilon}L_g\left|z_{\varepsilon}(s)-p_{\varepsilon}(s)\right|ds\\
	\end{eqnarray*}
	where $M_f=\max_{p\in[0,1]}f(p)$. Using again Grönwall's Lemma we obtain
	$$\left|z_{\varepsilon}(t)-p_{\varepsilon}(t)\right|  \leq \varepsilon M_f\exp{\left(  c_i L_g\right)}\to 0 \mbox{ as }
	\varepsilon\to 0.$$
	This proves that $\sup_{t\in[t_i,t_i+\varepsilon]}\left|z_{\varepsilon}(t)-p_{\varepsilon}(t)\right|\to 0$ when $\varepsilon\to 0$, and therefore $z_{\varepsilon}$ and $p_{\varepsilon}$ present the same jump at $t_i$ in the limit.
	To conclude, we solve $z_{\varepsilon}(t)$ in $[t_i,t_i+\varepsilon]$, $$\int_{t_i}^{t_i+\varepsilon}\frac{z'(s)}{g(z_{\varepsilon}(s))}ds=\int_{t_i}^{t_i+\varepsilon}\frac{c_i}{\varepsilon}ds= c_i,$$
	which leads to $G(z_{\varepsilon}(t_i+\varepsilon))-G(z_{\varepsilon}(t_i))=c_i$ and thus $z_{\varepsilon}(t_i+\varepsilon)=G^{-1}(G(z_{\varepsilon}(t_i))+c_i)$. Taking the limit $\varepsilon\to 0$ we conclude that $p^+(t_i)=G^{-1}(G(p^{-}(t_i))+c_i)$.
	
\end{proof}

\subsection{Sterile insect technique}
In order to find the equation satisfied by $M_S$ we take
$$
u(t)=\sum_{i=1}^{n}\frac{c_i}{\varepsilon}\charac{[t_i,t_i+\varepsilon]}.
$$
Hence, the equation satisfied by $M_S'$ becomes 
$$
M_S'(t)=\sum_{i=1}^{n}\frac{c_i}{\varepsilon}\charac{[t_i,t_i+\varepsilon]}-d_S M_S(t).
$$
Taking the limit $\varepsilon\to 0$ we obtain that the equation converges to
\begin{equation}
\label{eq:jump_Ms}
\left\{\begin{array}{lll}
M'_S(t) & = & -d_S M_S(t), \quad t\in[t_i,t_{i+1}], \quad i=0,\dots,n\\
M_S(t_i^+) & = &  M_S(t_i^{-})+c_i, \quad i=1,\dots,n
\end{array}  \right.\\
\end{equation}
We can solve this equation explicitly. Since the initial condition is $M_S(0)=0$ the solution reads 
\begin{equation}\label{eq:Ms}
M_S(t)=\sum\limits_{j=1}^{i} c_j e^{-d_S(t-t_j)}, \quad t\in[t_i,t_{i+1}], \quad i=1,\dots,n
\end{equation}

\subsection{{\it Wolbachia} method}

Looking at the equation on $p$ in system \eqref{sys:Wolbachia_Simp} and substituting the control function by 
$$
u(t)=\sum_{i=1}^{n}\frac{c_i}{\varepsilon}\charac{[t_i,t_i+\varepsilon]},
$$ 
we obtain
\begin{equation}
\label{eq:p_imp_eps}
p'(t)=f(p(t))+g(p(t))\sum_{i=1}^{n}\frac{c_i}{\varepsilon}\charac{[t_i,t_i+\varepsilon]}.
\end{equation}

Let $G$ be the antiderivative vanishing at zero of $1/g(p)$, that is $G(p):=\int_0^p \frac{dq}{g(q)}$. When we take the limit $\varepsilon\to 0$ in equation \eqref{eq:p_imp_eps}, we obtain:

\begin{equation}
\label{eq:jump_p}
\left\{\begin{array}{lll}
p'(t) & = & f(p(t)), \quad t\in[t_i,t_{i+1}], \quad i=0,\dots,n\\
p(t_i^+) & = & G^{-1}(G(p(t_i^-))+c_i) , \quad i=1,\dots,n
\end{array}  \right.\\
\end{equation}

\section{Optimality conditions}\label{sec:optcond}
We devote this section to the computation of the gradients of the functional $J$ in problem \eqref{prob:IH} for systems \eqref{sys:Sterile} and \eqref{sys:Wolbachia_Simp}. These gradients will be used in the numerical simulations of section \ref{sec:results}. We start by discussing things in a general setting and, later we apply to our problems.

Let $\mathbf{X}:\mathbb{R}^+\to\mathbb{R}^N$ be the solution to 
\begin{equation}\label{eq:X}
	\left\{\begin{array}{ll}
		\mathbf{X}'(t)=A(\mathbf{X}(t))+B(\mathbf{X}(t))y(t), & t\in[0,T] \\
		\mathbf{X}(0)=\mathbf{X_0}, &\end{array}\right.
\end{equation}
with $A,B:\mathbb{R}^+\to\mathbb{R}^N$ continuous and  $y:\mathbb{R}^+\to \mathbb{R}$ the solution to the differential equation with jump discontinuities 
$$
\left\{\begin{array}{ll}
	y'(t)=a(y(t)), & t\in[0,T] \\
	y(t_i^+)=b(y(t_i^-),c_i), & i=1,\dots,n 
	
\end{array}\right.
$$
with $a,b:\mathbb{R}^+\to\mathbb{R}$.
Now, consider $y_{\varepsilon}(t)$, the solution to 
$$
\left\{\begin{array}{ll}
	y'_{\varepsilon}(t)=a(y_{\varepsilon}(t)), & t\in[0,T] \\
	y_{\varepsilon}(t_i^+)=b(y_{\varepsilon}(t_i^-),c_i), & i\neq k \\
	y_{\varepsilon}(\tilde{t}_k^+)=b(y_{\varepsilon}(\tilde{t}_k^-),c_k),
\end{array}\right.
$$ where $\tilde{t}_k=t_k+\varepsilon$. Finally, lets consider also $\mathbf{X}_{\varepsilon}$ the solution to 
$$
\left\{\begin{array}{ll}
	\mathbf{X}_{\varepsilon}'(t)=A(\mathbf{X}_{\varepsilon}(t))+B(\mathbf{X}_{\varepsilon}(t))y_{\varepsilon}(t), & t\in[0,T] \\
	\mathbf{X}_{\varepsilon}(0)=\mathbf{X_0}, &\end{array}\right.
$$

We consider $\mathbf{X}$ to be a function of time, nevertheless, the value of the parameters $t_i$ and $c_i$, $i=1,\dots,n$ affects the value of $\mathbf{X}(t)$ for any $t>t_1$. In general, and for the rest of the section, we define the variation of any given function of time, $F_{\chi}$, depending on a parameter, $\chi$, as

\begin{equation*}
	\delta_{\chi}F_{\chi}(t):=\lim\limits_{\varepsilon\to 0} \frac{F_{\chi+\varepsilon}(t)-F_{\chi}(t)}{\varepsilon}.
\end{equation*}
As an example, in the case of the variation of $\mathbf{X}$ with respect to a given $t_k$ we have

\begin{equation*}
	\delta_{t_k}\mathbf{X}(t):=\lim\limits_{\varepsilon\to 0} \frac{\mathbf{X}_{\varepsilon}(t)-\mathbf{X}(t)}{\varepsilon}.
\end{equation*}

From equation \eqref{eq:X} we have that $$\mathbf{X}(t)=\mathbf{X}_0+\int_0^t A(\mathbf{X}(s))ds+\int_0^t B(\mathbf{X}(s))y(s)ds.$$
For a given $k\in\enum{1}{n}$, in case $t<t_k$, one has $\delta_{t_k}\mathbf{X}(t)=0$, since the time of the jump has no effect until it occurs.  In case $t>t_k$
\begin{eqnarray*}
	\delta_{t_k}\mathbf{X}(t)&=&\delta_{t_k}\int_0^t A(\mathbf{X}(s))ds+ \delta_{t_k}\lrp{\int_0^{t_k} B(\mathbf{X}(s))y(s)ds+\int_{t_k}^{t} B(\mathbf{X}(s))y(s)ds} \\
	&=& \int_0^t \delta_{t_k} A(\mathbf{X}(s))ds+ B(\mathbf{X}(t_k))y(t_k^-)- B(\mathbf{X}(t_k))y(t_k^+)+\int_{0}^{t} \delta_{t_k}\lrp{B(\mathbf{X}(s))y(s)}ds \\
	&=&  \int_0^t \lrp{\mathbf{D} A(\mathbf{X}(s))+\mathbf{D} B(\mathbf{X}(s))y(s)}\delta_{t_k}\mathbf{X}(s)ds+ B(\mathbf{X}(t_k))(y(t_k^-)-y(t_k^+)) \\
	&& + \int_{0}^{t} B(\mathbf{X}(s))\delta_{t_k}y(s) ds.
\end{eqnarray*}
We can express this as an ordinary differential equation with a jump discontinuity:
$$
\left\{\begin{array}{ll}
	\lrp{\delta_{t_k}\mathbf{X}}'(t)=\lrp{\mathbf{D} A(\mathbf{X}(t))+\mathbf{D} B(\mathbf{X}(t))y(t)}\delta_{t_k}\mathbf{X}(t)+B(\mathbf{X}(t))\delta_{t_k}y(t), & t\in[0,T] \\
	\delta_{t_k}\mathbf{X}(0)=0,\\
	\delta_{t_k}\mathbf{X}(t_k^+)=\delta_{t_k}\mathbf{X}(t_k^-)+B(\mathbf{X}(t_k))(y(t_k^-)-y(t_k^+)).
	&\end{array}\right.
$$

But since  $\delta_{t_k}\mathbf{X}(t)=0$ for $t<t_k$, we can simplify this system to:

\begin{equation}\label{eq:delta_tk_X}
	\resizebox{0.87\textwidth}{!}{$
		\begin{cases}
			\lrp{\delta_{t_k}\mathbf{X}}'(t)=\lrp{\mathbf{D} A(\mathbf{X}(t))+\mathbf{D} B(\mathbf{X}(t))y(t)}\delta_{t_k}\mathbf{X}(t)+B(\mathbf{X}(t))\delta_{t_k}y(t), & t\in[t_k,T] \\
			\delta_{t_k}\mathbf{X}(t_k^+)=B(\mathbf{X}(t_k))(y(t_k^-)-y(t_k^+)),
			&\end{cases}$}
\end{equation}
where $\delta_{t_k}y(t):=\lim_{\varepsilon\to 0}(y_{\varepsilon}(t)-y(t))/\varepsilon$.

Following the same lines we consider now $y_{\varepsilon}(t)$ as the solution to
$$
\left\{\begin{array}{ll}
	y'_{\varepsilon}(t)=a(y(t)_{\varepsilon}), & t\in[0,T] \\
	y_{\varepsilon}(t_i^+)=b(y_{\varepsilon}(t_i^-),c_i), & i\neq k, \\
	y_{\varepsilon}(t_k^+)=b(y_{\varepsilon}(t_k^-),c_k+\varepsilon).
\end{array}\right.
$$
In this case, for $t>t_k$ we have
\begin{eqnarray*}
	\delta_{c_k}\mathbf{X}(t)&=&\int_0^t \delta_{c_k}A(\mathbf{X}(s)) + \delta_{c_k}B(\mathbf{X}(s))y(s)+B(\mathbf{X}(s))\delta_{c_k}y(s)\, ds \\
	&=&  \int_0^t \lrp{\mathbf{D} A(\mathbf{X}(s))+\mathbf{D} B(\mathbf{X}(s))y(s)}\delta_{c_k}\mathbf{X}(s)+B(\mathbf{X}(s))\delta_{c_k}y(s) \, ds.
\end{eqnarray*}
Since  $\delta_{c_k}\mathbf{X}(t)=0$ for $t<t_k$, we can express this as the following ordinary differential equation:

\begin{equation}\label{eq:delta_ck_X}
	\resizebox{0.87\textwidth}{!}{$
		\left\{\begin{array}{ll}
			\lrp{\delta_{c_k}\mathbf{X}}'(t)=\lrp{\mathbf{D} A(\mathbf{X}(t))+\mathbf{D} B(\mathbf{X}(t))y(t)}\delta_{c_k}\mathbf{X}(t)+B(\mathbf{X}(t))\delta_{c_k}y(t), & t\in[t_k,T] \\
			\delta_{c_k}\mathbf{X}(t_k^+)=0.
			&\end{array}\right.$}
\end{equation}
with, again, $\delta_{c_k}y(t):=\lim_{\varepsilon\to 0}(y_{\varepsilon}(t)-y(t))/\varepsilon$.

In problem \eqref{prob:IH}, the functional we want to minimize is $J(u)=\int_0^T I_H(t)dt$. Since $I_H(t)$ is continuous we have that $$\delta_{t_k}J(u)=\int_0^T \delta_{t_k}I_H(t)dt,$$
we also have that $\delta_{c_k}J(u)=\int_0^T \delta_{c_k}I_H(t)dt$.
Hereafter we use expressions \eqref{eq:delta_tk_X} and \eqref{eq:delta_ck_X} in order to compute $\delta_{t_k}J$ and $\delta_{c_k}J$ for systems $\eqref{sys:Sterile}$ and $\eqref{sys:Wolbachia_Simp}$.

\subsection{Sterile Insect Technique}
We consider system \eqref{sys:Sterile}. The variable satisfying a differential equation with a jump discontinuity is $M_S(t)$. Therefore, considering $\mathbf{X}(t)=\lrp{S_H(t),E_H(t),I_H(t),S_M(t),E_M(t),I_M(t)}$ and $y(t)=M_S(t)$ we find that $\delta_{t_k}J=\int_{t_k}^{T}\lrp{\delta_{t_k}\mathbf{X}(t)}_{3}dt$
and $\delta_{c_k}J=\int_{t_k}^{T}\lrp{\delta_{c_k}\mathbf{X}(t)}_{3}dt$ where $\delta_{t_k}\mathbf{X}(t)$ and $\delta_{c_k}\mathbf{X}(t)$ are defined by equations \eqref{eq:delta_tk_X} and \eqref{eq:delta_ck_X} respectively and the subscript stands for the third component of the vector.
There are nonetheless two more terms to compute, $\delta_{t_k}M_S(t)$ and $\delta_{c_k}M_S(t)$. In the case of the Sterile Insect Technique we have a closed expression for $M_S(t)$, see equation \eqref{eq:Ms}, therefore the computation of the variation of $J$ with respect to $t_k$ and $c_k$ is straightforward. We have
\begin{equation}
	\label{eq:delta_tk_MS}
	\delta_{t_k}M_S(t)=\begin{cases}
		0 , \quad t\in[0,t_k] \\
		d_Sc_ke^{-d_S(t-t_k)} \quad t\in(t_k, T],
	\end{cases}    
\end{equation}
and
\begin{equation}
	\label{eq:delta_ck_MS}
	\delta_{c_k}M_S(t)=\begin{cases}
		0 , \quad t\in[0,t_k] \\
		e^{-d_S(t-t_k)} \quad t\in(t_k, T].
	\end{cases}    
\end{equation}

\subsection{{\it Wolbachia} method}
In the case of the use of {\it Wolbachia} (system \eqref{sys:Wolbachia_Simp}) the variable satisfying a differential equation with a jump discontinuity is the proportion of {\it Wolbachia} infected mosquitoes, $p(t)$. We consider now $$\mathbf{X}(t)=\lrp{S_H(t),E_H(t), I_H(t),E_M(t),I_M(t),E_W(t),I_W(t)}$$ and $y(t)=p(t)$. Once more, $\delta_{t_k}J=\int_{t_k}^{T}\lrp{\delta_{t_k}\mathbf{X}(t)}_{3}dt$
and $\delta_{c_k}J=\int_{t_k}^{T}\lrp{\delta_{c_k}\mathbf{X}(t)}_{3}dt$. Since the expressions of $\delta_{t_k}p(t)$ and $\delta_{c_k}p(t)$ are significantly harder to find than in the sterile insect case we compute them in the following propositions.

\begin{proposition}\label{prop:deltatk}
	Let $p$ solve 
	$$
	\left\{\begin{array}{ll}
		p'(t)=f(p(t))+\sum\limits_{i=1}^{n}c_i\delta(t-t_i)g(p(t)), & t\in[0,T] \\
		p(0)=p_{0}. &\end{array}\right.
	$$
	with $p(t_i^+)\neq\theta$ for all $i=1,\dots,n$.
	Let $c_i$ be fixed for all $i=1,\dots,n$ and let $p_{\varepsilon}(t)$ solve
	$$
	\left\{\begin{array}{ll}
		p'_{\varepsilon}(t)=f(p_{\varepsilon}(t))+\sum\limits_{\substack{i=1\\ i\neq k}}^{n} c_i\delta(t-t_i)g(p(t))+c_{k}\delta(t-(t_{k}+\varepsilon))g(p_{\varepsilon}(t)), & \\
		p_{\varepsilon}(0)=p_{0}. &\end{array}\right.
	$$
	Then, the variation of $p(t)$ with respect to $t_k$, $\delta_{t_k}p(t):=\lim_{\varepsilon\to 0}\frac{p_{\varepsilon}(T)-p(T)}{\varepsilon}$, is 
	\begin{equation} 
		\resizebox{0.87\textwidth}{!}{$
			\label{eq:var_p_tk}
			\delta_{t_k}p(t)=\begin{cases}
				0 , \quad t\in[0,t_k] \\
				\frac{f(p(t_k^-))g(p(t_k^+))-f(p(t_k^+))g(p(t_k^-))}{g(p(t_k^-))}\frac{f(p(t))}{f(p(t_{i}^+))}\prod\limits_{j=k+1}^{i}{\frac{g(p(t_j^+))}{g(p(t_j^-))}\frac{f(p(t_j^-))}{f(p(t_{j-1}^+))}}, t\in(t_i, t_{i+1}], k\leq i\leq n.
			\end{cases}$}
	\end{equation}

\end{proposition}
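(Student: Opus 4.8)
The plan is to track the variation $\delta_{t_k}p$ interval by interval, exploiting the fact that between consecutive release times $p$ obeys the autonomous scalar equation $p'=f(p)$, whose flow I denote $\Phi_s$. First, for $t\in[0,t_k]$ the perturbed and unperturbed trajectories coincide (they share the same initial datum and the same earlier jumps at $t_1,\dots,t_{k-1}$), so $\delta_{t_k}p(t)=0$ there, which is the first branch. The whole difficulty is therefore to produce the correct \emph{amplitude} of the variation created at $t_k$, and then to propagate it forward across the flow segments and the later (unperturbed) jumps.

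Two elementary propagation rules will handle the bookkeeping. On any interval $(t_i,t_{i+1})$ the variation solves the linearised equation $(\delta_{t_k}p)'=f'(p)\,\delta_{t_k}p$; since differentiating $p'=f(p)$ shows that $t\mapsto f(p(t))$ solves this same linear equation, one obtains the closed form $\delta_{t_k}p(t)=\delta_{t_k}p(t_i^+)\,f(p(t))/f(p(t_i^+))$, which is exactly the factor $f(p(t))/f(p(t_i^+))$ in the statement (and is where the hypothesis $p(t_i^+)\neq\theta$ is used, to keep $f(p(t_i^+))\neq 0$ so that $f$ does not vanish along the trajectory). At a release time $t_j$ with $j>k$ the jump map $y\mapsto b(y,c_j):=G^{-1}(G(y)+c_j)$ is the \emph{same} for both trajectories, so $\delta_{t_k}p(t_j^+)=\partial_y b(p(t_j^-),c_j)\,\delta_{t_k}p(t_j^-)$; differentiating $G(b)=G(y)+c_j$ and using $G'=1/g$ gives $\partial_y b=g(p(t_j^+))/g(p(t_j^-))$, the other factor in the product.

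The main obstacle is the initial amplitude at $t_k$, which requires a careful boundary-layer analysis on the short interval where the two trajectories are out of phase. For small $\varepsilon$, on $(t_k,t_k+\varepsilon)$ the unperturbed solution has already jumped and flows from $p(t_k^+)$, whereas $p_\varepsilon$ has not yet jumped and still flows from $p(t_k^-)$, the perturbed jump occurring only at $t_k+\varepsilon$. Writing $p_\varepsilon(t)=\Phi_{t-t_k-\varepsilon}\big(b(\Phi_\varepsilon(p(t_k^-)),c_k)\big)$ and $p(t)=\Phi_{t-t_k}(p(t_k^+))$ for a fixed $t\in(t_k,t_{k+1})$, I would expand to first order in $\varepsilon$ using $\Phi_\varepsilon(q)=q+\varepsilon f(q)+o(\varepsilon)$, $\partial_s\Phi_s=f(\Phi_s)$ and the flow-sensitivity identity $\partial_q\Phi_s(q)=f(\Phi_s(q))/f(q)$. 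Two effects then combine: the displacement of the post-jump value, weighted by $\partial_y b=g(p(t_k^+))/g(p(t_k^-))$, and the $-\varepsilon$ shift of the jump instant, which contributes the $-f(p(t_k^+))$ term. Collecting them yields
\[
\delta_{t_k}p(t_k^+)=\frac{f(p(t_k^-))\,g(p(t_k^+))-f(p(t_k^+))\,g(p(t_k^-))}{g(p(t_k^-))},
\]
which is precisely the prefactor in the statement.

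Finally, I would assemble the formula by iterating the two propagation rules from $t_k$ up to the interval $(t_i,t_{i+1}]$ containing $t$: flowing across each gap multiplies by $f(p(t_j^-))/f(p(t_{j-1}^+))$ and crossing each intermediate jump multiplies by $g(p(t_j^+))/g(p(t_j^-))$, producing the telescoping product $\prod_{j=k+1}^{i}$, while the last flow segment contributes $f(p(t))/f(p(t_i^+))$. Multiplying these against the amplitude $\delta_{t_k}p(t_k^+)$ computed above gives exactly the second branch of the claimed expression. A short induction on $i\geq k$ makes the iteration rigorous, the base case $i=k$ being the boundary-layer computation itself.
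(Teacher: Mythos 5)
Your proposal is correct and follows the same overall skeleton as the paper's proof: zero variation before $t_k$, a source amplitude created at $t_k$, and forward propagation by the factors $f(p(t))/f(p(t_i^+))$ across flow intervals and $g(p(t_j^+))/g(p(t_j^-))$ across the later jumps. The difference lies only in how the two propagation rules and the source term are derived. The paper obtains the interval factor by implicit differentiation of the time-of-flight relation $F(p(t))-F(p(t_i^+))=t-t_i$ with $F'=1/f$ (using the monotonicity of $p$ on each interval, guaranteed by $p(t_i^+)\neq\theta$), and it extracts the amplitude at $t_k$ from the explicit $t_k$-dependence of that relation on $[t_k,t_{k+1}]$ (the $-1$ term) combined with $\delta_{t_k}p(t_k^-)=f(p(t_k^-))$. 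You instead invoke the variational equation $(\delta p)'=f'(p)\,\delta p$ together with the observation that $f(p(\cdot))$ solves it, and you compute the amplitude by a first-order expansion of the composed map $\Phi_{t-t_k-\varepsilon}\circ b(\cdot,c_k)\circ\Phi_\varepsilon$ using the flow-sensitivity identity $\partial_q\Phi_s(q)=f(\Phi_s(q))/f(q)$. The two routes are computationally equivalent; your boundary-layer expansion makes the origin of the two first-order effects at $t_k$ (advection of the pre-jump point versus retardation of the jump instant) more transparent, while the paper's implicit differentiation is more compact and avoids introducing the flow map. Your reading of the role of the hypothesis $p(t_i^+)\neq\theta$ (keeping $f$ nonvanishing along each trajectory segment so the division by $f(p(t_i^+))$ is legitimate) is also consistent with the paper's use of it.
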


\begin{proof}
	We begin considering $t\in [t_i,t_{i+1}]$. In each one of these intervals we have that $p'(t)=f(p(t))$. Since $f$ is bistable, $f(p)<0$ in $(0,\theta)$ and $f(p)>0$ in $(\theta,1)$. Therefore, since we assumed $p(t_i^+)\neq\theta$ for all $i=1,\dots,n$, $p(t)$ is injective in $[t_i,t_{i+1}]$, and we can write
	$$\int_{p(t_i^+)}^{p(t)}\frac{dq}{f(q)}=t-t_i.$$ We define $F$ to be the antiderivative of $1/f$ vanishing at $p(t_i^+)$, that is $F(p):=\int_{p(t_i^+)}^{p}\frac{dq}{f(q)}$, thus we obtain the relationship 
	\begin{equation}\label{eq:F}
		F(p(t))-F(p(t_i^+))=t-t_i.
	\end{equation}
	
	We remark that $p(t)=p_{\varepsilon}(t)$ for all $t\in[0,t_k]$. Therefore in that interval $\delta_{t_k}p(t)=0$. Hence, we can restrict ourselves to the case $k\leq i \leq n$. Differentiating implicitly this equation, we get
	$$
	\frac{1}{f(p(t))}\delta_{t_k}p(t)-\frac{1}{f(p(t_i^+))}\delta_{t_k}p(t_i^+)=0
	$$
	and thus 
	$$
	\delta_{t_k}p(t)=\frac{f(p(t))}{f(p(t_i^+))}\delta_{t_k}p(t_i^+).
	$$
	To compute $\delta_{t_k}p(t_i^+)$ we use that $p(t_i^+)=G^{-1}(G(p(t_i^-))+c_i)$, therefore 
	$$
	\delta_{t_k}p(t_i^+)=(G^{-1})'(G(p(t_i^-))+c_i)G'(p(t_i^-))\delta_{t_k}p(t_i^-)=\frac{g(p(t_i^+))}{g(p(t_i^-))}\delta_{t_k}p(t_i^-)
	$$
	where we used the inverse function theorem to write $(G^{-1})'=1/(G'\circ G^{-1})$. Analogously to equation \eqref{eq:F} we find that $F(p(t^-_i))-F(p(t_{i-1}^+))=t-t_{i-1}$, so $\delta_{t_k}p(t_i^-)=\frac{f(p(t^-_{i}))}{f(p(t_{i-1}^+))}\delta_{t_k}p(t_{i-1}^+)$. We can repeat this process iteratively until we get to $F(p(t^-_{k+1}))-F(p(t_{k}^+))=t-t_{k}$, then 
	$$
	\frac{1}{f(p(t^-_{k+1}))}\delta_{t_k}p(t^-_{k+1})=-1+\frac{1}{f(p(t_k^+))}\delta_{t_k}p(t_k^+)=-1+\frac{1}{f(p(t_k^+))}\frac{g(p(t_k^+))}{g(p(t_k^-))}\delta_{t_k}p(t_k^-)
	$$
	and $\delta_{t_k}p(t^-_k)=\delta_{t_k}\int_{t_{k-1}}^{t_k}(f(p(t)))dt=f(p(t^-_k))$ from which we can deduce the final expression.
	
\end{proof}
Note that in the expression of $\delta_{t_k}p(t)$ we are using the convention that if the productory subscript is bigger than the superscript, then its equal to $1$.

\begin{proposition}\label{prop:deltack}
	Let $p$ solve 
	$$
	\left\{\begin{array}{ll}
		p'(t)=f(p(t))+\sum_{i=1}^{n}c_i\delta(t-t_i)g(p(t)), & t\in[0,T] \\
		p(0)=p_{0}. &\end{array}\right.
	$$
	with $p(t_i^+)\neq\theta$ for all $i=1,\dots,n$.
	Let $t_i$ be fixed for all $i=1,\dots,n$ and let $p_{\varepsilon}(t)$ solve
	$$
	\left\{\begin{array}{ll}
		p'_{\varepsilon}(t)=f(p_{\varepsilon}(t))+\sum\limits_{\substack{i=1\\ i\neq k}}^{n} c_i\delta(t-t_i)g(p(t))+(c_{k}+\varepsilon)\delta(t-t_{k})g(p_{\varepsilon}(t)), & \\
		p_{\varepsilon}(0)=p_{0}. &\end{array}\right.
	$$
	Then, the variation of $p(t)$ with respect to $c_k$, $\delta_{c_k}p(t):=\lim_{\varepsilon\to 0}\frac{p_{\varepsilon}(t)-p(t)}{\varepsilon}$, is 
	\begin{equation} 
		\resizebox{0.87\textwidth}{!}{$
			\label{optcond:pck}
			\delta_{c_k}p(t)=\begin{cases}
				0 , \quad t\in[0,t_k] \\
				g(p(t_k^+))\frac{f(p(t))}{f(p(t_{i}^+))}\prod\limits_{j=k+1}^{i}{\frac{g(p(t_j^+))}{g(p(t_j^-))}\frac{f(p(t_j^-))}{f(p(t_{j-1}^+))}}, \quad t\in(t_i, t_{i+1}], k\leq i\leq n.
			\end{cases}$}
	\end{equation}

\end{proposition}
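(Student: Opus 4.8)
The plan is to reproduce the argument of Proposition \ref{prop:deltatk} almost line by line, since the perturbation here modifies the dynamics downstream of $t_k$ in exactly the same way; the sole difference lies in the value the variation takes immediately after $t_k$. First I would observe that perturbing $c_k$ has no effect on the solution before the $k$-th jump, so $\delta_{c_k}p(t)=0$ on $[0,t_k]$ and in particular $\delta_{c_k}p(t_k^-)=0$.

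The propagation mechanism is identical to the previous proposition and rests on two ingredients. On each interval $(t_i,t_{i+1}]$ the equation reduces to $p'=f(p)$; since $f$ is bistable and $p(t_i^+)\neq\theta$, the solution is injective there, giving the implicit relation $F(p(t))-F(p(t_i^+))=t-t_i$ with $F$ an antiderivative of $1/f$. Differentiating with respect to $c_k$ yields the interior propagation law $\delta_{c_k}p(t)=\frac{f(p(t))}{f(p(t_i^+))}\delta_{c_k}p(t_i^+)$. Across each jump with $i\neq k$, differentiating $p(t_i^+)=G^{-1}(G(p(t_i^-))+c_i)$ via the inverse function theorem (using $(G^{-1})'=1/(G'\circ G^{-1})=g\circ G^{-1}$, as $G'=1/g$) gives $\delta_{c_k}p(t_i^+)=\frac{g(p(t_i^+))}{g(p(t_i^-))}\delta_{c_k}p(t_i^-)$, exactly as before.

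The only genuinely new computation is the seed at $t_k^+$, which is also the step I expect to require the most care. Here the jump relation is perturbed in the coefficient, $p_\varepsilon(t_k^+)=G^{-1}(G(p(t_k^-))+c_k+\varepsilon)$; since $p(t_k^-)$ does not depend on $c_k$, differentiating in $\varepsilon$ and again invoking $(G^{-1})'=g\circ G^{-1}$ gives simply $\delta_{c_k}p(t_k^+)=g(p(t_k^+))$. This is considerably cleaner than the seed $\frac{f(p(t_k^-))g(p(t_k^+))-f(p(t_k^+))g(p(t_k^-))}{g(p(t_k^-))}$ of Proposition \ref{prop:deltatk}, which arose there precisely because varying $t_k$ also shifts the free endpoint of the preceding flow, whereas here the endpoint is held fixed and only the jump amplitude is perturbed. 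Feeding the seed $g(p(t_k^+))$ into the same recursion—alternating the interior law and the jump law from $t_k$ up to the interval containing $t$—then produces the stated product formula directly, with no obstacle beyond bookkeeping the indices (again adopting the convention that the empty product equals $1$).
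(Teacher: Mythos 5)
Your proposal is correct and follows essentially the same route as the paper: the interior propagation via implicit differentiation of $F(p(t))-F(p(t_i^+))=t-t_i$, the jump propagation via the inverse function theorem applied to $G^{-1}$, and the seed $\delta_{c_k}p(t_k^+)=g(p(t_k^+))$ obtained because $p(t_k^-)$ is unaffected by $c_k$. The only cosmetic difference is that the paper writes the jump rule uniformly for all $i$ with a Kronecker delta, $\delta_{c_k}p(t_i^+)=\frac{g(p(t_i^+))}{g(p(t_i^-))}\delta_{c_k}p(t_i^-)+\delta_{ki}\,g(p(t_i^+))$, whereas you single out the case $i=k$ as the seed; the content is identical.
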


\begin{proof}
	Following a very similar process to the one carried out in the proof of Proposition~\ref{prop:deltatk}, we obtain $\delta_{c_k}p(t)=\frac{f(p(t))}{f(p(t_i^+))}\delta_{c_k}p(t_i^+)$. In problem \eqref{prob:IH}, the $c_i$ must satisfy the constraint $\sum_{i=1}^{n}c_i=C$, but we are not dealing with this constraint for the moment, therefore $\delta_{c_k}c_i=\delta_{ki}$, where $\delta_{ki}$ is the Kronecker's delta.
	
	We compute $\delta_{c_k}p(t_i^+)$, obtaining
	\begin{eqnarray*}
		\delta_{c_k}p(t_i^+)&=&(G^{-1})'(G(p(t_i^-))+c_i)\lrp{G'(p(t_i^-))\delta_{c_k}p(t_i^-)+\delta_{ki}}\\
		&=&\frac{g(p(t_i^+))}{g(p(t_i^-))}\delta_{c_k}p(t_i^-)+\delta_{ki}g(p(t_i^+)).
	\end{eqnarray*}
	Following the same lines of the proof of Proposition \ref{prop:deltatk}, from equation \eqref{eq:F} applied in the interval $[t_{i-1},t_i]$, differentiating implicitly we obtain $$\delta_{c_k}p(t_i^-)=\frac{f(p(t_i^-))}{f(p(t_{i-1}^+))}\delta_{c_k}p(t_{i-1}^+).$$ Finally, iterating the process until the interval $[t_k,t_{k+1}]$ and rearranging the terms we obtain the result.
	
\end{proof}

\section{Results}\label{sec:results}
We present in this section the optimal solutions of problem \eqref{prob:IH}, obtained using numerical simulations. We optimize simultaneously the time profile of the releases and the amount of mosquitoes released in each one. We allow two releases to occur at the same time. This implies that at that time a release with the total amount of mosquitoes of the two releases combined is done, reducing at each time this occurs the number of effective releases by one. The simulations have been performed using \textit{Python}.  
For the numerical optimization, the time variables are updated by using a standard step variable gradient descent method. Regarding the weights $(c_i)_{1\leq i\leq n}$, due to the constraint $\sum_{i=1}^{n}c_i=C$, we used an augmented Lagrangian algorithm. An explanation of the method used can be found in section \ref{sec:Uzawa}. The details about the computation of the gradients of the functional are detailed in section \ref{sec:optcond}. The models considered in this work capture the essence of the interaction of the modified vectors with the disease and its effect on the transmission. Nevertheless, in order to be precise, more complex models should be considered. These simulations do not intend to give quantitative results, but rather qualitative ones which, nevertheless, allow us to explore certain scenarios that go beyond what is done in field interventions nowadays.

To model the start of an outbreak we place ourselves in the context of a nearly fully susceptible population where a small number of infected humans and mosquitoes  are present. Since in our model the total amount of humans is constant and since we consider at $t=0$ the mosquito compartment at equilibrium, we need to subtract this initial amount of infectious from the respective susceptible compartments. Thus, the initial conditions for our simulations will be $$(S_H(0),E_H(0),I_H(0),S_M(0),E_M(0),I_M(0))=(H-I_H^{0},0,I_H^0,K^*-I_M^0,0,I_M^0),$$ with $I_H^0\ll H$ and $I_M^0\ll K^*$  (in particular for the simulations we chose $I_H^0=I_M^0=20$). All the other variables in the two systems are set to $0$ at the start, namely $$M_S(0)=0 \quad \mbox{and} \quad (E_W(0),I_W(0),p(0))=(0,0,0).$$ Since $R_0^M$ is greater than $1$, this will lead to an outbreak of the disease and a spike in the number of cases. We perform several simulations for different values of $C$. Throughout the simulations we will fix the parameters of the systems to the values in Table~\ref{tab:parameters}, which correspond to the particular case of dengue.

\begin{table}[h]
	\centering
	\caption{Parameter values for dengue}
	\begin{adjustbox}{max width=\textwidth}
		\begin{tabular}{|c|c|c|c|c|}
			\hline
			Category & Parameter & Name & Value & Source \\
			\hline 
			\multirow{17}{*}{Biology}&  $b_M$ & Wild mosquitoes birth rate  & 4.4 day$^{-1}$ & \cite{wolbachia,bossin}\tablefootnote{In the model studied in \cite{bossin} the term accounting for the birth of mosquitoes is not straightforwardly equivalent to the one in our model. Its value has been adapted in order to account for this difference.}\\
			\cline{2-5}
			& $b_W$ & {\it Wolbachia} infected birth rate & 3.96 day$^{-1}$ & \cite{wolbachia,bossin} \\
			\cline{2-5}
			& $d_M$ & Wild mosquitoes death rate  & 0.04 day$^{-1}$ & \cite{wolbachia,bossin} \\ 
			\cline{2-5}
			& $d_W$ & {\it Wolbachia} infected death rate &  0.044 day$^{-1}$  & \cite{wolbachia,bossin}  \\
			\cline{2-5}
			& $d_S$ & Sterile mosquitoes death rate  & 0.12 day$^{-1}$ & \cite{cont_ster_wolb} \\ 
			\cline{2-5}
			& $s_h$ & Cytoplasmic incompatibility level & 0.9 & \cite{wolbachia}  \\
			\cline{2-5}
			& $s_c$ & Competitiveness level & 0.9 &  \\
			\cline{2-5}
			& $K$ & Carrying capacity & 65234 \tablefootnote{$K$ to $H$ ratios present a huge variability in the literature. Indeed, the ratio may depend on numerous factors and may not be constant in time. Lacking solid evidence to pick a value, we choose $K$ such that the size of the mosquito population at equilibrium is equal to the human population size.}   &  \\
			\cline{2-5}
			& $b_H$ & Human birth/death rate & 0.013 year$^{-1}$ & \\
			\cline{2-5}
			& $\sigma_H$ & Human recover time & 0.2 day$^{-1}$ & \cite{ndii_hickson} \\
			\cline{2-5}
			& $H$ & Human population size & 65000 & \cite{Deployment_Yogyakarta_Dengue} \\
			\cline{2-5}
			& $\beta_{HM}$ & Transmission rate H$\rightarrow$M & 0.1647 day$^{-1}$& \cite{ndii_hickson}  \\
			\cline{2-5}
			& $\beta_{MH}$ & Transmission rate H$\leftarrow$M & 0.1647 day$^{-1}$& \cite{ndii_hickson}  \\
			\cline{2-5}
			& $\beta_{HW}$ & Transmission rate H$\rightarrow$W & 0.157 day$^{-1}$ & \cite{ndii_hickson}  \\
			\cline{2-5}
			& $\beta_{WH}$ & Transmission rate H$\leftarrow$W & 0.0785 day$^{-1}$ & \cite{ndii_hickson}  \\
			\cline{2-5}
			& $\gamma_M$ & Non infected incubation period & 0.186 day$^{-1}$ & \cite{EIP}  \\
			\cline{2-5}
			& $\gamma_W$ & {\it Wolbachia} infected incubation period & 0.146 day$^{-1}$ & \cite{EIP} \\
			\cline{2-5}
			& $\gamma_H$ & Human incubation period & 0.17 day$^{-1}$ & \cite{chan}  \\
			\hline
			\multirow{2}{*}{Optimization} &  $T$ & Final time & 450 days &  \\
			\cline{2-5}
			&  $C$ & Amount of mosquitoes released  & $10^4$ - $6\cdot10^7$  & \\
			\hline
		\end{tabular}
		\label{tab:parameters}
	\end{adjustbox}
\end{table}

\subsection{Sterile Insect Technique}
The optimal solution for problem \eqref{prob:IH} in the SIT setting consists of a combination of consecutive pulses with a similar spacing. The fact that several spaced jumps are more efficient in reducing the number of susceptible mosquitoes, eventually leading to a reduction in the number of infections, is a result of the fact that, in the present model, the amount of sterile mosquitoes decreases exponentially between releases. Therefore, by spacing the releases a population of sterile mosquitoes can be sustained longer than doing one single release with all the mosquitoes together. We also observe that results do not only depend on the amount of mosquitoes released, but also on the number of releases considered. Comparing Figures~\ref{fig:Pulse_SIT_10} and \ref{fig:Pulse_SIT_20}  we can see how, by increasing the number of releases from 10 to 20, the final amount of infections is considerably reduced, specially with a comparatively high amount of mosquitoes. Nevertheless, this trend does not continue indefinitely. Increasing the number of releases way above 20 does not reduce significantly the number of infections anymore, even though there is no clear threshold in the number of releases for which the reduction is significant and that this may depend on the $C$ considered. The times and costs of the instant releases in Figures~\ref{fig:Pulse_SIT_10} and \ref{fig:Pulse_SIT_20} are given in Tables~\ref{tab:SIT_10} and \ref{tab:SIT_20} respectively.

\begin{table}[h!]
	\centering
	\caption{Results of the simulations performed for the SIT with 10 releases}
	\begin{tabular}{|c|c|c|c|c|}
		\hline
		$C$ & Time of releases &  Amount of mosquitoes released & $\int_0^T I_H(t)dt $ \\
		\hline 
		\multirow{5}{*}{$3 \cdot 10^7$} & $t_1=172.0$, $t_2= 178.4$ & $c_1=2277164.9$, $c_2=3118801.0$  & \multirow{5}{*}{$250375.4$} \\
		& $t_3=185.6$, $t_4=193.0$  & $c_3=3457741.0$, $c_4=3525953.9$ & \\
		& $t_5=200.7$, $t_6=208.7$  & $c_5=3458904.6$, $c_6=3328601.2$ & \\
		& $t_7= 217.3$, $t_8=226.6$  & $c_7=3157284.8$, $c_8=2932013.1$ & \\
		& $t_9=237.0$, $t_{10}=249.1$  & $c_9=2615241.0$, $c_{10}=2128294.3$ & \\
		\hline
		\multirow{5}{*}{$6 \cdot 10^7$} & $t_1=78.8$, $t_2=90.3$ & $c_1=6568442.3$, $c_2=8417318.4$  & \multirow{5}{*}{$72862.0$} \\
		& $t_3=102.9$, $t_4=116.3$  & $c_3=8619401.9$, $c_4=8082975.5$ & \\
		& $t_5=130.6$, $t_6=146.3$  & $c_5=7239149.0$, $c_6=6225676.6$ & \\
		& $t_7=163.5$, $t_8= 182.9$  & $c_7=5173640.8$, $c_8=4146284.4$ & \\
		& $t_9=205.0$, $t_{10}=230.8$  & $c_9=3194370.6$, $c_{10}=2332740.8$ & \\
		\hline
	\end{tabular}
	\label{tab:SIT_10}
\end{table}

\begin{table}[h!]
	\centering
	\caption{Results of the simulations performed for the SIT with 20 releases}
	\begin{tabular}{|c|c|c|c|c|}
		\hline
		$C$ & Time of releases &  Amount of mosquitoes released & $\int_0^T I_H(t)dt $ \\
		\hline 
		\multirow{10}{*}{$3 \cdot 10^7$} & $t_1=167.7$, $t_2=171.5$ & $c_1=1084557.5$, $c_2=1529725.4$  & \multirow{10}{*}{$244012.2$} \\
		& $t_3=175.7$, $t_4=179.9$  & $c_3= 1720612.9$, $c_4=1786314.4 $ & \\
		& $t_5= 184.0$, $t_6=188.1$  & $c_5=1793907.7$, $c_6=1781311.8$ & \\
		& $t_7=192.1$, $t_8= 196.1$  & $c_7=1750939.8$, $c_8= 1708089.2$ & \\
		& $t_9= 200.2$, $t_{10}=204.4$  & $c_9=1674451.1$, $c_{10}=1653637.3$ & \\
		& $t_{11}=208.7$, $t_{12}=213.2$  & $c_{11}=1641097.9$, $c_{12}=1597815.1$ & \\
		& $t_{13}=217.8$, $t_{14}=222.7$  & $c_{13}=1544202.8$, $c_{14}=1491664.7$ & \\
		& $t_{15}=227.8$, $t_{16}= 233.3$  & $c_{15}=1438846.7$, $c_{16}=1380652.5$ & \\
		& $t_{17}=239.1$, $t_{18}=245.5$  & $c_{17}=1308476.4$, $c_{18}=1199302.0$ & \\
		& $t_{19}=252.4$, $t_{20}=259.9$  & $c_{19}=1056512.9$, $c_{20}=857882.1$ & \\
		\hline 
		\multirow{10}{*}{$6\cdot 10^7$} & $t_1=0.0$, $t_2=3.7$ & $c_1=4230525.4$, $c_2=4214863.5$  & \multirow{10}{*}{$2124.4$} \\
		& $t_3=8.2$, $t_4=13.0$  & $c_3=4175080.6$, $c_4=4104782.5$ & \\
		& $t_5=18.1$, $t_6= 23.4$  & $c_5=4025147.5$, $c_6=3942640.9$ & \\
		& $t_7=29.2$, $t_8=35.5$  & $c_7=3855009.1$, $c_8=3759644.0$ & \\
		& $t_9=42.4$, $t_{10}=50.2$  & $c_9=3651466.3$, $c_{10}=3522392.6$ & \\
		& $t_{11}= 59.0$, $t_{12}=69.1$  & $c_{11}=3362768.5$, $c_{12}=3162408.6$ & \\
		& $t_{13}=80.8$, $t_{14}=94.2$  & $c_{13}=2913468.2$, $c_{14}=2614816.3$ & \\
		& $t_{15}=109.9$, $t_{16}= 128.1$  & $c_{15}=2275534.3$, $c_{16}=1912277.7$ & \\
		& $t_{17}=149.5$, $t_{18}=174.9$  & $c_{17}=1548925.6$, $c_{18}=1209010.4$ & \\
		& $t_{19}=205.7$, $t_{20}=243.9$  & $c_{19}= 911714.1$, $c_{20}=607524.1$ & \\
		\hline
	\end{tabular}
	\label{tab:SIT_20}
\end{table}

As we can observe in Figures~\ref{fig:Pulse_SIT_10} and \ref{fig:Pulse_SIT_20}, with a comparatively low amount of mosquitoes, $C=3\cdot 10^7$, the releases are concentrated around the peak of the infections, with the largest releases occurring  during the peak. Their effect is of only mitigating the outbreak, that is, the curve of infections remains fairly similar but peaking a bit earlier and lower. For this amount of mosquitoes we do not observe a great reduction in the number of cases by using 20 releases instead of 10. Namely, for 10 releases we obtain $J_{10}= 250375.4$ and for 20, $J_{20}=244012.2$. We can compute the reduction in the number of cases by comparing, numerically, $J(u)$ for the uncontrolled system with $J(u)$ for the controlled one. The value of $J(u)=\int_0^TI_H(t)dt$ in the case of the uncontrolled system is $J_{0}=293644.1$. This means that with $C=3\cdot 10^7$ we obtain approximately a $14.7\%$ reduction in the total amount of cases for 10 releases and a $16.9\%$ reduction for 20.

A possible interpretation for this solution is that, with the amount of mosquitoes considered, the population of susceptible mosquitoes cannot be consistently kept low for a long period. Therefore, the best use of the sterile males is to release them to reduce as much as possible the amount of susceptible (and also of infectious) mosquitoes when the transmission is at its prime.

On the other hand, with a comparatively big amount of mosquitoes, $C=6\cdot 10^7$, the releases shift to the beginning and present an asymmetrical, skewed shape. We see this happening with 10 releases, attenuating considerably further the outbreak, but even more in the case  with 20 releases. In this case, the first release occurs at $t_1=0.0$ and it results in an almost complete eradication of the outbreak. The largest releases occur soon after the first one. Releases get more sparse and smaller as time advances, specially for 20 releases, where some of them are clearly detached from the rest and occur after the peak of the outbreak. The fact that mosquitoes keep being released once the outbreak is suppresed is related to the fact that our model does not incorporate an Allee effect. This means that even when the wild mosquito population is very low, it can grow again to its initial values if the releases of sterile mosquitoes stop. Therefore, releases of small amounts of mosquitoes are needed so the outbreak does not start again inside the time horizon considered. The difference observed as a result of the different number of jumps in this case is more abrupt. We obtain a value of $J_{10}=72862.0$ for 10 releases, which means  $75.2\%$ less infections in the time window considered, and a value of $J_{20}=2124.4$ for 20 releases, that is, a $99.3\%$ reduction.  

With this amount of mosquitoes, specially when they are spread over 20 releases, the population can be kept low for a long time. Our interpretation of these results is that, due to this capability of long term population reduction, the optimal solution consists in releasing as soon as possible, preventing the outbreak from gaining traction in the first place. Then, smaller releases keep being done to prevent the population from increasing again. Hence, being able to divide the mosquitoes in more releases becomes more important in this case. We see clearly  how the number of releases can affect the outcome, even for the same $C$, in the lower rows of figures \ref{fig:Pulse_SIT_10} and \ref{fig:Pulse_SIT_20}. With 10 releases, although initially the outbreak is greatly reduced, the population cannot be kept low consistently in all the time window considered and the cases rise again substantially towards the end.

\begin{figure}[h!]
	\centering
	\includegraphics[width=0.95\textwidth]{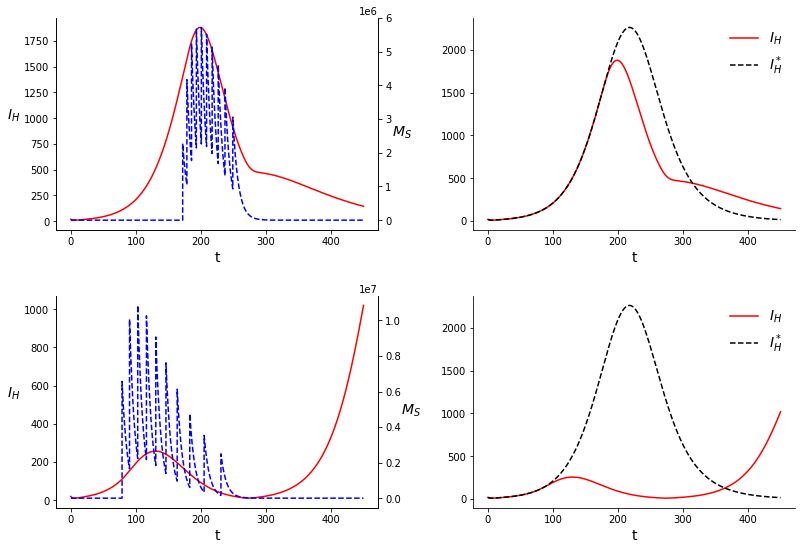}
	\caption{Results of the simulations for the SIT with $C=3\cdot 10^7$ (upper row) and $C=6\cdot 10^7$ (lower row) considering 10 releases. The dashed blue line corresponds to the amount of sterile mosquitoes present at each time and the its jumps correspond to the releases. $I_H^*$, on the right column, corresponds to the uncontrolled case.}
	\label{fig:Pulse_SIT_10}
\end{figure}

\begin{figure}[h!]
	\centering
	\includegraphics[width=0.95\textwidth]{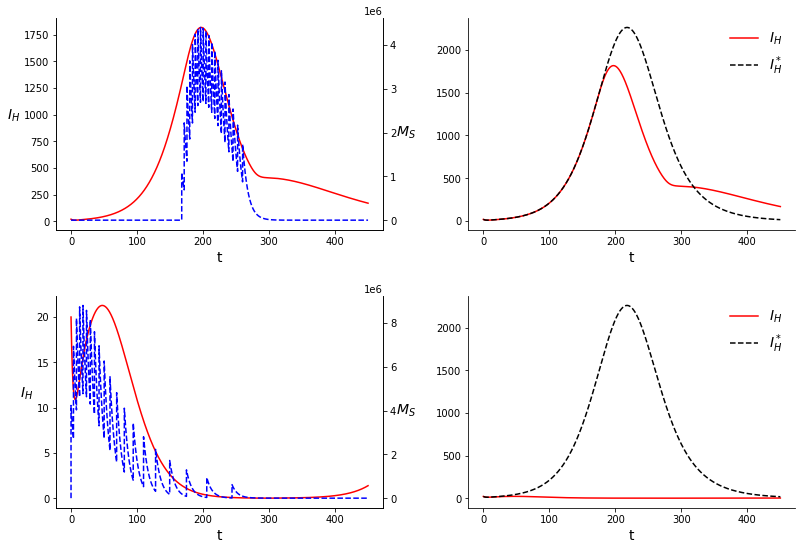}
	\caption{Results of the simulations for the SIT with $C=3\cdot 10^7$ (upper row) and $C=6\cdot 10^7$ (lower row) considering 20 releases.}
	\label{fig:Pulse_SIT_20}
\end{figure}

Another approach we can take, arguably more in line with applications on the field, is to optimize only the times of the releases while keeping the amount of mosquitoes constant which may correspond, for instance, to logistic constraints like a constant production capacity of the sterile mosquito production facility and/or a limited flow capacity for the mosquito release protocol. Of course, the result in the reduction of the infections will be worse than the counterpart we have just presented. Nevertheless, it raises a reasonable question that we can answer thanks to our model simulations: to which extent it is preferable the use of a more sophisticated technique over a less efficient but simpler one? The results are presented tables \ref{tab:SIT_10_Cfix} and \ref{tab:SIT_20_Cfix}.

\begin{table}[h!]
	\centering
	\caption{Results of the simulations performed for the SIT with 10 releases and all $c_i=C/10$.}
	\begin{tabular}{|c|c|c|c|}
		\hline
		$C$ & Time of releases & $\int_0^T I_H(t)dt $ \\
		\hline 
		\multirow{2}{*}{$3 \cdot 10^7$} & $t_1=173.2$, $t_2=180.6$, $t_3= 187.4$, $t_4=194.1$, $t_5=201.1$ & \multirow{2}{*}{$250880.3$} \\
		& $t_6=208.3$, $t_7=216.3$, $t_8=225.1$, $t_9=235.4$, $t_{10}=248.0$ &  \\
		\hline
		\multirow{2}{*}{$6\cdot 10^7$} & $t_1=98.4$, $t_2=109.2$, $t_3=119.5$, $t_4=130.1$, $t_5=141.5$ & \multirow{2}{*}{$99223.3$} \\
		& $t_6=154.3$, $t_7=169.0$, $t_8=186.5$, $t_9=208.3$, $t_{10}=236.4$  & \\
		\hline
	\end{tabular}
	\label{tab:SIT_10_Cfix}
\end{table}

\begin{table}[h!]
	\centering
	\caption{Results of the simulations performed for the SIT with 20 releases and all $c_i=C/20$.}
	\begin{adjustbox}{max width=\textwidth}
		\begin{tabular}{|c|c|c|c|}
			\hline
			$C$ & Time of releases & $\int_0^T I_H(t)dt $ \\
			\hline 
			\multirow{4}{*}{$3 \cdot 10^7$} & $t_1=168.3$, $t_2=172.8$, $t_3=176.8$, $t_4=180.6$, $t_5=184.2$ & \multirow{4}{*}{$244623.4$} \\
			& $t_6=187.8$, $t_7=191.4$, $t_8=195.0$, $t_9=198.7$, $t_{10}=202.5$ &  \\
			& $t_{11}=206.5$, $t_{12}=210.6$, $t_{13}=214.9$, $t_{14}=219.5$, $t_{15}=224.4$ &  \\
			& $t_{16}=229.8$, $t_{17}=235.7$, $t_{18}=242.3$, $t_{19}=250.0$, $t_{20}=259.5$ &  \\
			\hline
			\multirow{4}{*}{$6\cdot 10^7$} & $t_1=0.0$, $t_2=3.8$, $t_3=8.0$, $t_4=12.4$, $t_5=17.0$ & \multirow{4}{*}{$2556.1$} \\
			& $t_6=21.7$, $t_7=26.8$, $t_8=32.1$, $t_9=38.0$, $t_{10}=44.4$ &  \\
			& $t_{11}=51.6$, $t_{12}=59.6$, $t_{13}=68.9$, $t_{14}=79.7$, $t_{15}=92.6$ &  \\
			& $t_{16}=108.1$, $t_{17}=127.4$, $t_{18}=151.7$, $t_{19}=183.3$, $t_{20}=225.5$ &  \\
			\hline
		\end{tabular}
	\end{adjustbox}
	\label{tab:SIT_20_Cfix}
\end{table}

In figures \ref{fig:Pulse_SIT_10_Cfix} and \ref{fig:Pulse_SIT_20_Cfix} we can see that optimal strategies in time do not differ a lot with those of figures \ref{fig:Pulse_SIT_10} and \ref{fig:Pulse_SIT_20} respectively. Still, releases are done around the peak of the outbreak in the case of a relatively low amount of mosquitoes. As we increase the amount of mosquitoes and the number of releases they shift to the left,  resulting in a further reduction of the infections.

\begin{figure}[h!]
	\centering
	\includegraphics[width=0.95\textwidth]{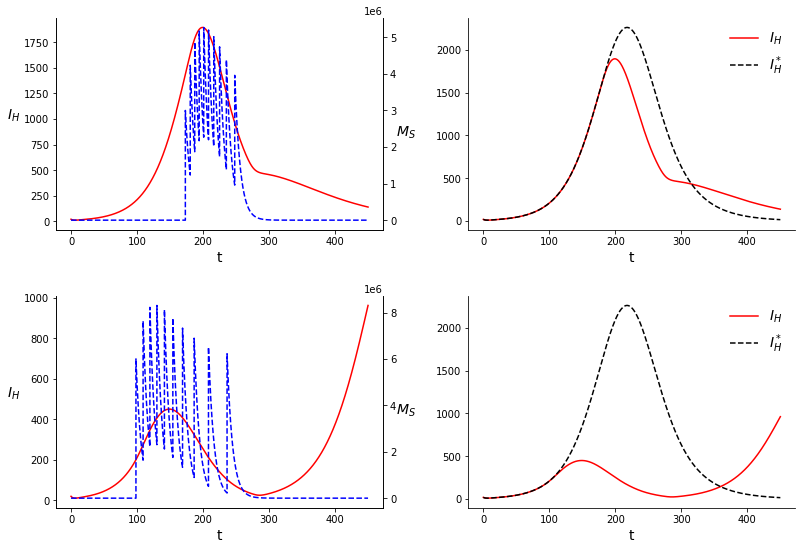}
	\caption{Results of the simulations for the SIT with $C=3\cdot 10^7$ (upper row) and $C=6\cdot 10^7$ (lower row) considering 10 releases and an equal distribution of the mosquitoes between the releases.}
	\label{fig:Pulse_SIT_10_Cfix}
\end{figure}

\begin{figure}[h!]
	\centering
	\includegraphics[width=0.95\textwidth]{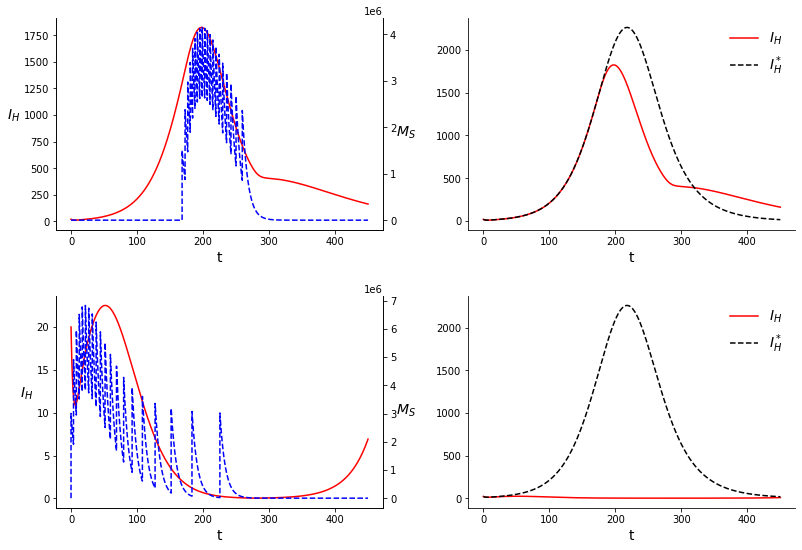}
	\caption{Results of the simulations for the SIT with $C=3\cdot 10^7$ (upper row) and $C=6\cdot 10^7$ (lower row) considering 20 releases and an equal distribution of the mosquitoes between the releases.}
	\label{fig:Pulse_SIT_20_Cfix}
\end{figure}

As for the effectiveness of this approach, we show a comparison of the results of the optimization of the times alone and that of the times and the costs in tables \ref{tab:SIT_10_Comp} and \ref{tab:SIT_20_Comp}. As we can see there does not seem to be a significant advantage in optimizing both times and costs except in one case, the case with $C=6\cdot 10^7$ and 10 releases. 

The fact that the optimal strategy can change significantly when the number of releases is increased suggests that solutions for this setting are very sensitive to changes on the problem characteristics. Comparing figures \ref{fig:Pulse_SIT_10} with \ref{fig:Pulse_SIT_10_Cfix}, and \ref{fig:Pulse_SIT_20} with \ref{fig:Pulse_SIT_20_Cfix}, we see that optimizing the amount of mosquitoes at each release makes the first releases move to the left but also increases the total time span of the releases, which is a similar effect to the addition of new releases. Elaborating further in our biological interpretation of the results, this suggests that for ten releases and $C=6\cdot 10^7$ we can keep the population low during a certain amount of time, but not enough to prevent the outbreak. A slight improvement of the technique in this setting (either an increase in the number of releases or an optimization of the number of mosquitoes released at each impulse) can make a difference in the ability to control the outbreak by keeping the wild mosquito population at a low level over a longer period of time, thus improving the results. On the other hand, when we are far from significantly reducing the outbreak or when we can almost prevent it, the advantage of also optimizing the amount of mosquitoes at each release becomes smaller.

\begin{table}[h!]
	\centering
	\caption{Comparison of the reductions in the infections obtained on the simulations performed for the SIT with 10 releases.}
	\begin{tabular}{|c|c|c|}
		\hline
		$C$ & Times & Times and costs \\
		\hline 
		$3 \cdot 10^7$ & $14.6\%$ & $14.7\%$ \\
		\hline
		$6\cdot 10^7$ & $66.2\%$ 
		& $75.1\%$ \\
		\hline
	\end{tabular}
	\label{tab:SIT_10_Comp}
\end{table}

\begin{table}[h!]
	\centering
	\caption{Comparison of the reductions in the infections obtained on the simulations performed for the SIT with 20 releases.}
	\begin{tabular}{|c|c|c|}
		\hline
		$C$ & Times & Times and costs \\
		\hline 
		$3 \cdot 10^7$ & $16.7\%$ & $16.9\%$ \\
		\hline
		$6\cdot 10^7$ & $99.1\%$ 
		& $99.3\%$ \\
		\hline
	\end{tabular}
	\label{tab:SIT_20_Comp}
\end{table}

\subsection{{\it Wolbachia} method}

Regarding the {\it Wolbachia} method in all cases all the pulses cluster into one single pulse. In other words, the optimal solution is to perform a single release with all the available mosquitoes. This makes it useless to optimize the amount of mosquitoes released in each jump and turns the problem into a one-dimensional optimization one: $\min\limits_{t_1\in[0,T]} J(u)$, with $J(u)=\int_0^T I_H(t)dt$ and $u(t)=C\delta(t-t_1)$.

As found in other works studying the use of {\it Wolbachia} to produce a mosquito population replacement \cite{ABP,wolbachia}, solutions present two clearly distinct behaviours. Since the equation $p'=f(p)$ is bi-stable, if the proportion of {\it Wolbachia} infected mosquitoes exceeds a certain threshold, $p=\theta$, then the system moves to a full invasion state without further intervention. The parameter determining the two regimes is the total amount of mosquitoes, $C$. If there are more mosquitoes than the amount needed to lead the system to $p=\theta$ we will observe one kind of behaviour, different from the case where there are less. From the initial conditions we have $p(0)=0$. We can compute the amount of mosquitoes needed to reach $p=\theta$ in a single jump. If we reach $p=\theta$ in the first jump, $\theta=p(t_1^+)=G^{-1}\lrp{G(p(t_1^-)+C)}=G^{-1}(C)$, thus $C=G(\theta)$. For the parameters considered here, $G(\theta)\approx 14850$. 

In figure \ref{fig:Pulse_WB} we plot the optimal solutions to problem \eqref{prob:IH} for system \eqref{sys:Wolbachia_Simp} with the parameters of table \ref{tab:parameters}. In case $C<G(\theta)$ the jump occurs before the outbreak reaches its peak. The larger is $C$, the smaller is $t_1$. In Figure \ref{fig:Pulse_WB}, for $C=10000$, $t_1 = 147.5$. Instead, in case $C>G(\theta)$ the jump is at $t_1=0$. The system from this point tends to $p=1$ without the need of releasing mosquitoes anymore. The value of $J(u)=\int_0^TI_H(t)dt$ in the case of the uncontrolled system yields $J_{0}=294501.4$. With $C=10000$ the profile of the outbreak is not altered very much, but it peaks at a lower value. The value of $J(u)$ in this case is $J_{10000}=288362.7$, roughly a $2.1\%$ reduction in the total amount of cases.
With $C=20000$ the change is the infected humans curve is much more appreciable. The curve peaks at a much lower level but decays slower. In this case the value of $\int_0^TI_H(t)dt$ is $J_{20000}=128899.1$, which is a $56.2\%$ reduction in the number of cases.

\begin{figure}[h!]
	\centering
	\includegraphics[width=0.95\textwidth]{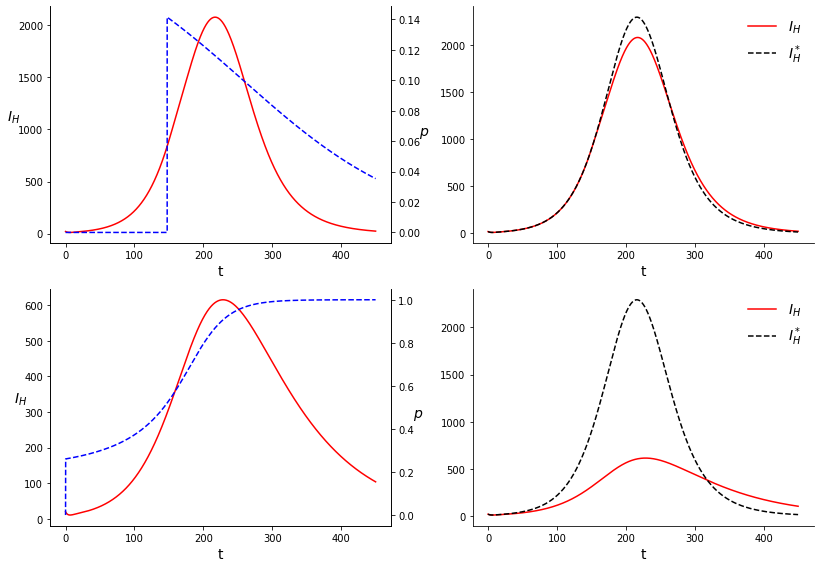}
	\caption{Results of the simulations for the {\it Wolbachia} method with $C=10000$ (upper row) and $C=20000$ (lower row). The proportion of {\it Wolbachia} infected mosquitoes corresponds to the dashed blue line on the left column. $I_H^*$, on the right column, corresponds to the uncontrolled case.}
	\label{fig:Pulse_WB}
\end{figure}

The biological interpretation of these results is in line with the one for the sterile mosquitoes. When it is not possible to trigger a population replacement, the optimal strategy is to release the mosquitoes before the peak of the epidemic. Since the number of Wolbachia-infected mosquitoes declines with time, this policy minimizes the presence of the wild mosquitoes (with a greater vector capacity) during the phase of largest transmission. On the other hand, if it is possible to trigger the population replacement, the sooner we act in the system, the better. Since the proportion of Wolbachia-infected mosquitoes is going to increase naturally there are no incentives for waiting to make the release. 

We remark that the amount of mosquitoes needed for this technique to be effective is much lower than for the SIT. This makes sense, since the {\it Wolbachia} population is self-sustainable while the sterile mosquitoes do not reproduce and thus it is necessary to continue to do new releases if we don't want the sterile mosquito population to die out. Nonetheless, the exact values of mosquitoes released, or the ratio of mosquitoes needed in one technique with respect to the other cannot be drawn directly from our study due to the limitations of the model and the uncertainty on the parameters.

\section{Numerics: an augmented Lagrangian algorithm}\label{sec:Uzawa}

In this section, we explain and detail further the numerical method used for obtaining the results. We implemented a gradient descent to optimize the times of the releases, $t_i$. At each step, the coefficients $(c_i)_{1\leq i\leq n}$ being given, the control function was updated according to $$u_{k+1}=\Pi_{\mathcal{T}} \left(u_k-\varepsilon_t\nabla_{t}J(u_k)\right) \mbox{, where} \quad \nabla_t J(u)= \lrp{\delta_{t_1} J(u),\dots,\delta_{t_n} J(u)}$$
and where $\Pi_{\mathcal{T}}$ denotes the projection onto the set of controls 
$$
\left\{\sum_{i=1}^{n}c_i\delta(t-t_i), \ 0\leq t_1\leq \dots\leq t_n\right\}.
$$

Here, $J(u)=\int_0^T I_H(t)dt$. The values of $\delta_{t_i} J(u)$ for $i=1,\dots,n$ have been computed in Proposition~\ref{prop:deltatk} (see Section \ref{sec:optcond}).

Starting from a random initial condition we optimize the time of the releases, $t_i$, until a certain level of functional flatness is attained. Then we optimize the $c_i$, that is, the amount of mosquitoes released at each $t_i$.

The costs, $c_i$, have been optimized using an augmented Lagrangian algorithm, which consists in considering the following functional 
$$
L(u,\lambda)=\int_0^TI_H(t)dt+\lambda\left(\sum_{i=1}^{n}c_i-C\right)+\frac{\rho}{2}\left(\sum_{i=1}^{n}c_i-C\right)^2.
$$
The second term is added in order to take into account the constraint $\sum_{i=1}^{n}c_i=C$. The real number $\lambda$ is the Lagrange multiplier associated this constraint, which has to be found numerically at the same time as $u$. The augmented Lagrangian method transforms the constrained minimization problem into an unconstrained one, similarly to the Uzawa algorithm. The new functional has to be minimized with respect to $u$, and maximized with respect to $\lambda$. The solution to the problem is hence searched as a saddle point of $L$. The addition of the third term can be seen as a convexification of the dual problem. The addition of the squared term  to the Lagrangian accelerates the convergence provided $\rho$ is chosen carefully.

In order to find the saddle point of $L$ we take one step at a time, minimizing with respect to $u$ and then maximizing it with respect to $\lambda$, following the scheme:
\begin{equation*}
	\begin{array}{lll}
		u_{k+1} & = & u_k-\varepsilon_c\left(\nabla_{c}J(u_k)+\lambda_{k}+\rho\left(\sum_{i=1}^{n}c_i-C\right)\right), \\
		\lambda_{k+1} & = & \max\left(\lambda_k+\rho\left(\sum_{i=1}^{n}c_i-C\right),0\right). \\
	\end{array}
\end{equation*}
Where $\nabla_c J(u)$ is the gradient of the functional $J(u)$ with respect to the costs, analogous to $\nabla_t J(u)$. The components of $\nabla_c J(u)$ have been computed in Proposition \ref{prop:deltack} (see Section \ref{sec:optcond}). Additional explanations regarding \textsl{augmented Lagrangian type algorithms} can be found in \cite{MR2441683}.

In order to picture better the algorithm implemented we provide in figure \ref{fig:J_constr} an example of history of two key quantities along the iterations of the algorithm, namely $J(u)=\int_0^T I_H(t)dt$ and $\sum_{i}c_i-C$ during a simulation. We take as an example the simulation for the sterile insect technique with 20 releases and $C=3\cdot10^7$. The value of $J$ falls sharply at the begginning as the times of the releases, $(t_i)_{1\leq i\leq n}$, move from their initial random positions. The small oscillations observed later correspond to the first time we optimize the weights, $(c_i)_{1\leq i\leq n}$. Since we are looking for a saddle point of $L$ there are iterations where the value of $J$ actually increases. Then it starts a slow convergence to the final state where the values of $(t_i)_{1\leq i\leq n}$ and $(c_i)_{1\leq i\leq n}$ are refined. The simulation stops when a certain level of functional flatness is attained. In Figure~\ref{fig:J_constr}, on the right, the x-axis presents slightly less iterations since we only show the iterations on the weights. At first this quantity oscillates until the value of $L$ stabilizes. Since we alternate the optimization of the times and the weights, whenever the times are adjusted, new oscillations appear as the weights, $(c_i)_{1\leq i\leq n}$, adjust to the new $(t_i)_{1\leq i\leq n}$ values. As expected, in the long run, $\sum_i c_i -C$ stabilizes around 0, so the constraint $\sum_i c_i=C$ is respected.

\begin{figure}[h!]
	\centering
	\includegraphics[width=\textwidth]{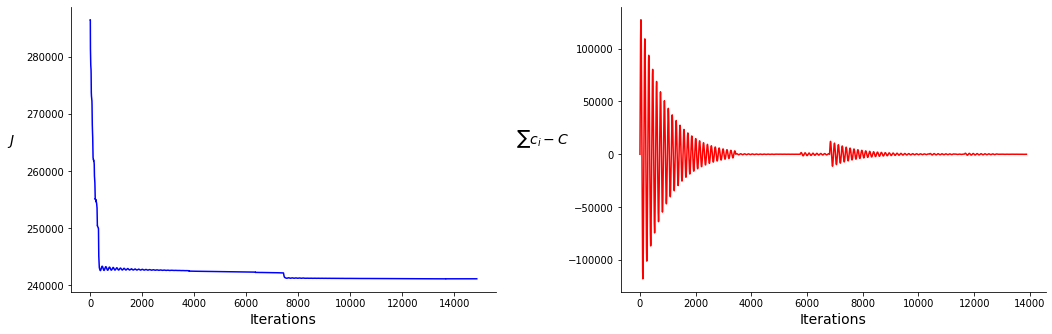}
	\caption{Evolution of the functional $J(u)$ and $\sum c_i -C$ during the sterile insect simulation for 20 releases and $C=3\cdot 10^7$.\label{fig:J_constr}}
\end{figure}

\section{Conclusion and perspectives}

In this paper, we have focused on presenting an approach combining mathematical modeling and optimization, which we believe is relevant for determining good mosquito release strategies. We focused on simplified models that are nevertheless relevant in certain regimes and parameter ranges. Despite the simplifications, they allow us to single out different strategies depending on some simple logistic constraints like the number of mosquitoes available to be released, in total or each time, or the number of releases that can be done. SIT and population replacement using \textit{Wolbachia} have different goals and are very different strategies when it comes to long term planning. Our results single out these differences, but they also find some similarities when these strategies are applied in the context of epidemic outbreak control. They can be summed up as follows:
\begin{itemize}
	\item In both cases, when we have at our disposal fewer mosquitoes (or also in case we do not consider enough
	releases for the SIT) the optimal strategy is focused on the \textit{mitigation} of the outbreak. When resources do not allow for a proper control of the outbreak, independently of the particular control mechanism considered, they should be allocated when most of the infections are occurring in order to dampen the transmission, thus mosquitoes are released before or around the peak of the epidemic.
	
	\item In case we have enough mosquitoes (and we consider enough releases in the SIT case), the optimal strategy shifts to the \textit{suppression} of the outbreak. When the control of the outbreak is possible, action must be taken from the beginning of the time window considered, preventing the epidemic from gaining traction in the first place, which allows to reduce drastically the number of infections in both scenarios.
	
	\item The main differences observed in the context considered are, first, the need for multiple releases in SIT (since sterile mosquitoes do not reproduce and need to be periodically replaced), while for \textit{Wolbachia} population replacement, under our hypothesis, the optimal release strategy consists in one single release. Another important difference is the sharp transition between both regimes observed with the \textit{Wolbachia} technique, whereas in the SIT case this transition between regimes is smooth.
	
\end{itemize}

Although, as previously mentioned, we don't claim that the present study allows to do a precise comparison of the advantages of the different strategies, the results of this study suggest that it should be worth applying the same methodology  to more complex and general models. In particular, we would like to focus in the future on the following generalizations:

\begin{itemize}
\item As explained in Remark \ref{rem:males}, only female mosquito dynamics is considered in our models, while male dynamics is assumed to be similar. We wish to consider more precise models, in which the specific dynamics of males is also taken into account. This will require the addition of extra compartments in the model and will increase the dimension of the differential system considered.

\item In the case of the SIT, results vary significantly with the number of releases considered when this one is low, but the improvements dampen as the number of releases increase. It can be useful to properly study the improvement of results as a function of the number of releases considered for different values of C, in order to be able to estimate in which cases it is worth to consider a bigger number of releases.

\item Also in the SIT case, results depend on the time window considered, since mosquitoes can reproduce again in treated areas when the treatment is stopped. A very interesting question would be: How does the optimal strategy evolve as T increases? In other words, how do optimal strategies evolve when we do not restrict ourselves to the duration of a particular outbreak but instead we want to minimize the infections for, a priori, unbounded periods of time?

\item In the search for optimal strategies, another relevant factor we wish to take into account is seasonality, which has a strong influence on mosquito reproduction rates and development.
\end{itemize}

\vspace{0.2cm}
Data (the code that was used for doing the simulations) is available on reasonable request.
\vspace{0.2cm}

\section*{Acknowledgement}
	
\begin{minipage}{1.9cm}
\includegraphics[width=\textwidth]{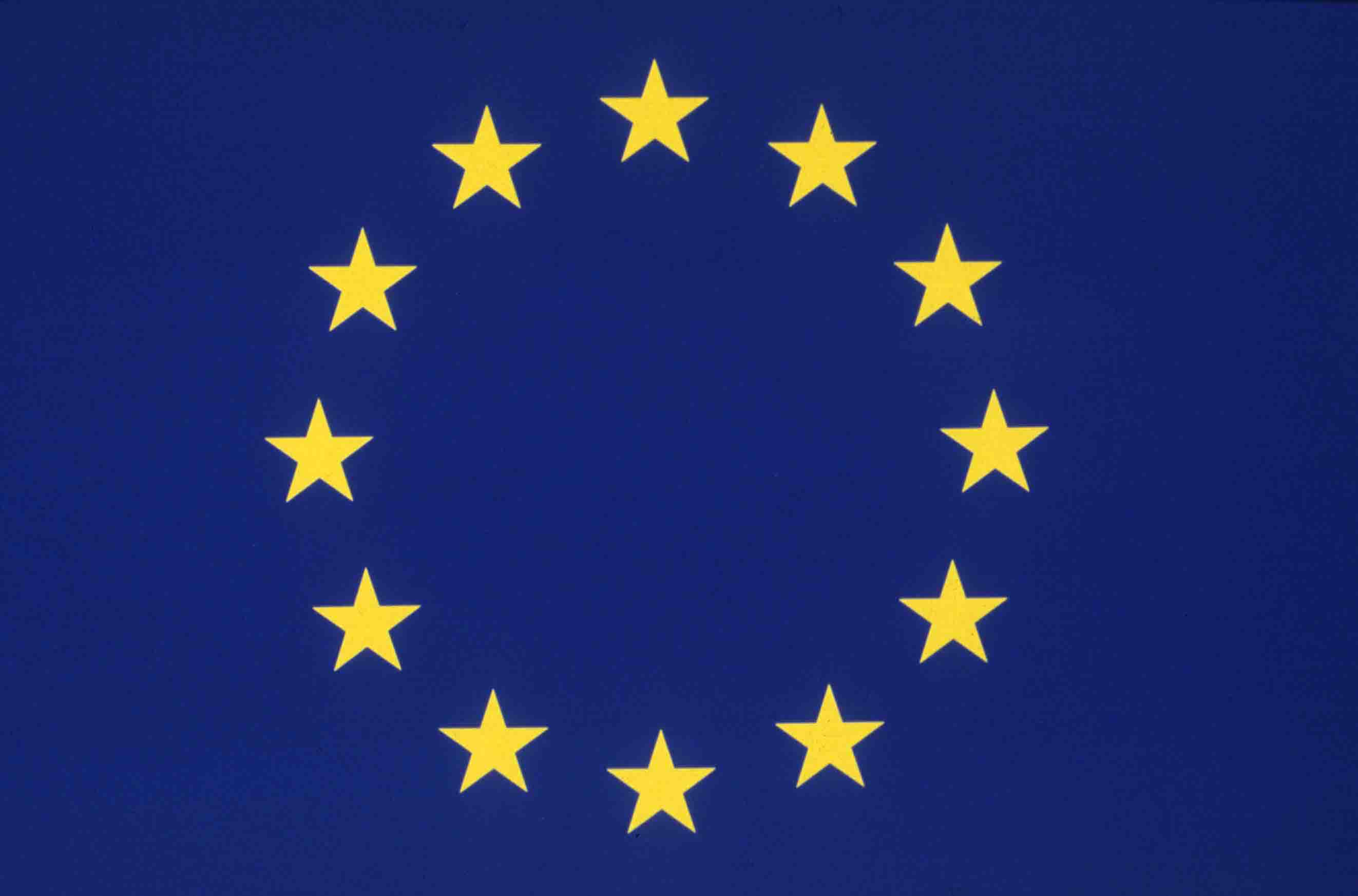}
\end{minipage}
\hspace{0.2cm}
\begin{minipage}{13.7cm}
This project has received funding from the European Union’s Horizon 2020 research and innovation programme under the Marie Skłodowska-Curie grant agreement No 754362.
\end{minipage}

\vspace{0.2cm}

It also benefited from the support of the PHC/FCT Program Pessoa 2020 (44567YH-5460).
C.~Rebelo has been supported by FCT projects  UIDB/04621/2020 and UIDP/04621/2020 of CEMAT at FC-Universidade de Lisboa.

The first three authors were partially supported by the Project ``Analysis and simulation of optimal shapes - application to lifesciences'' of the Paris City Hall.

The third author were partially supported by the ANR Projects Shape Optimization - ``SHAPO'' and ``TRECOS''.

\bibliographystyle{abbrv}
\bibliography{bib_dengue}

\end{document}